\def\Riem{\mathop{\rm Rm}}
\def\Ric{\mathop{\rm Ric}}
\def\cRic{\mathop{\rm R\i\makebox[0pt]{\raisebox{5pt}{\tiny$\circ$\;\,}}c}}
\def\XXint#1#2#3{{\setbox0=\hbox{$#1{#2#3}{\int}$ }
\vcenter{\hbox{$#2#3$ }}\kern-.5\wd0}}
\newtheorem{theorem}{Theorem}[section]
\newtheorem{lemma}[theorem]{Lemma}
\newtheorem{proposition}[theorem]{Proposition}
\theoremstyle{definition}
\theoremstyle{remark}
\title{A Gap Theorem for Half-Conformally Flat Manifolds}
\author{Martin Citoler-Saumell and Brian Weber}
\date{July 21, 2019}
\begin{document}

\maketitle

\section{Introduction}

Any half-conformally flat 4-manifold has a metric of constant scalar curvature in its conformal class.
Such manifolds are divided into three classes; such a manifold is of {\it negative}, {\it zero}, or {\it positive} type depending on the valus of this constant.
A classificaiton of all compact half-conformally flat metrics, even up to conformal equivalence, is currently far away, although on compact manifolds of positive type neccesarily the intersection form is negative definite, and in the zero scalar curvature case the intersection form may have signature $(k,n)$ only for $k=0$, $1$, or $3$ \cite{LeB}.
However little else can be said unless other restrictions are in place such as simple connectedness.
The main result of this paper is a gap theorem, giving restrictions on the betti numbers when scalar curvature is negative but small.

This result is a weakened extension of the LeBrun result into half-conformally flat manifolds of negative type, and it is important to recall LeBrun's method.
Following \cite{LeB}, any harmonic representative $\eta$ of $H^{2}(M;\mathbb{R})$ that has $*\eta=+\eta$ and $\triangle\eta=\frac{s}{3}\eta$.
In case $s>0$ the maximum principle makes solutions impossible, and when $s=0$ then any solution is covariant-constant.
The $b^+=1$ case consists precisely of the compact scalar-flat K\"ahler manifolds that are not Ricci-flat.
When $b^+=3$ the manifold is hyper-K\"ahler, and fully classified: such a manifold must be a quotient of a $K3$ surface or a torus.
In the zero case with $b^+=0$, little is known unless the manifold is simply-connected, in which case $M^4$ is the connected sum $\sharp_1^k\overline{\mathbb{C}P}{}^2$ where $k\ge6$ \cite{L4}.
There is no $b^+=2$ case since any two K\"ahler forms on a 4-manifold automatically create a third: their mutual perpendicular in the $\bigwedge{}^+$ bundle.
Likewise $b^+>3$ is impossible, as $dim\left(\bigwedge{}^+\right)=3$ so $\bigwedge{}^+$ has at most three independent covariant-constant sections.

Even less has is known about negative-type half-conformally flat manifolds, aside from the fact that they are plentiful \cite{T}.
This paper adds to our fundamental knowledge of negative-type manifolds.
Ordinarily $L^2$ control over curvature cannot control topology, but our Theorem \ref{ThmMain} says that in the non-collapsed setting, if scalar curvature $s$ is negative but sufficiently close to 0 then its topology is controlled.
\begin{theorem} \label{ThmMain}
	Assume $(M^4,g)$ is compact and anti-self dual of negative type, with constant scalar curvature $s<0$.
	Assume the Sobolev constant $C_S$ is bounded on definite size balls:
	\begin{eqnarray}
		\left(\int_{B_r(p)}\varphi^4\right)^{\frac14}
		\;\le\;C_S\,\left(\int_{B_r(p)}|\nabla\varphi|^2\right)^{\frac12} \label{IneqSobolev}
	\end{eqnarray}
	whenever $Vol\left(B_r(p)\right)\le\frac1N{}Vol{}M^4$, and $\varphi\in{}C_c^{0,1}(B_r(p))$
	Assume there exist a constant $\Lambda<\infty$ so that we have the topological bound $\chi+3\tau\,>\,-\Lambda^2$.
	
	Then there exists some $\epsilon=\epsilon(\Lambda,C_S)$ so that $s^2Vol(M)<\epsilon$ implies the three betti numbers $b_1$, $b^+$, $b^-$ are uniformly bounded:
	\begin{eqnarray}
		b_1,\,b^+,\,b^-\;\le\;C
	\end{eqnarray}
	where $C$ depends only on $\Lambda$.
\end{theorem}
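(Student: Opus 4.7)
The plan is to follow the standard recipe for gap theorems: convert the topological assumption into an $L^2$ curvature bound, apply Weitzenböck–Bochner to each cohomology class, and then bound dimensions via Moser iteration together with a reproducing-kernel argument.

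\emph{Step 1: Curvature bounds from topology.} Using the Gauss–Bonnet–Chern and Hirzebruch signature formulas together with $W^+=0$, one computes
\[
  \chi+3\tau\;=\;\frac{1}{8\pi^2}\int_M\Bigl(\tfrac{s^2}{24}-\tfrac{1}{2}|\mathring{\Ric}|^2-|W^-|^2\Bigr)dV.
\]
Combined with $\chi+3\tau>-\Lambda^2$ and $s^2\,\mathrm{Vol}(M)<\epsilon$, this gives
\[
  \int_M|W^-|^2+\tfrac12\int_M|\mathring{\Ric}|^2\;\le\;8\pi^2\Lambda^2+\tfrac{\epsilon}{24}.
\]

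\emph{Step 2: Weitzenböck–Bochner–Kato.} For each cohomology class, Weitzenböck gives
\[
  \nabla^*\nabla\alpha=-\Ric(\alpha),\qquad
  \nabla^*\nabla\eta^+=-\tfrac{s}{3}\eta^+,\qquad
  \nabla^*\nabla\eta^-=-\tfrac{s}{3}\eta^-+2W^-(\eta^-),
\]
where in the self-dual 2-form case we use $W^+=0$. Applying Kato's inequality, the norm $u=|\omega|$ satisfies weakly $-\Delta u\le V u$ with $V\le c(|s|+|W^-|+|\mathring{\Ric}|)$, so $\|V\|_{L^2}\le C(\Lambda)$ (and $\|V\|_{L^2}\to0$ in the self-dual case as $\epsilon\to0$).

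\emph{Step 3: Dimension bound via Moser and a reproducing kernel.} Given an $L^2$-orthonormal basis $\omega_1,\dots,\omega_d$ of harmonic forms of a given type, the diagonal $\Omega(x)=\sum_i|\omega_i(x)|^2$ is basis-independent and satisfies $\int_M\Omega=d$, while at each point $\Omega(x)$ is controlled by a finite multiple of the supremum of $|\omega(x)|^2$ over $L^2$-unit harmonic $\omega$. Calling a ball $B_r(p)$ (with $\mathrm{Vol}(B_r(p))\le\tfrac{1}{N}\mathrm{Vol}(M)$) \emph{good} when $\int_{B_r}V^2<\epsilon_0$ for an appropriate threshold $\epsilon_0=\epsilon_0(C_S)$, Moser iteration on $-\Delta u\le Vu$ using the assumed Sobolev inequality yields $\|\omega\|_{L^\infty(B_{r/2}(p))}^2\le C(C_S)\,\fint_{B_r(p)}|\omega|^2$, hence $\Omega(x)\lesssim C(C_S)/\mathrm{Vol}(B_r(x))$ on the good region. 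Integrating $\Omega$ over the good region and using $\mathrm{Vol}(B_r)\gtrsim c\,\mathrm{Vol}(M)/N$ on the cover bounds the contribution there by $C(\Lambda,C_S)$. The number of bad balls in a Vitali-type cover is bounded by $\|V\|_{L^2(M)}^2/\epsilon_0\le C(\Lambda)/\epsilon_0$, and a bubble-type analysis shows each bad ball contributes only a bounded number of dimensions.

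\emph{Main obstacle.} The delicate point is Step 3 for $b_1$ and $b^-$: unlike $b^+$, where the potential is globally small in $L^2$ and one concludes $b^+\le3$ by a direct perturbation of the $s=0$ covariant-constant rigidity, here $|W^-|$ and $|\mathring{\Ric}|$ are only bounded in $L^2$. Thus one must run an $\epsilon$-regularity/bubbling argument adapted to the local Sobolev hypothesis, controlling both how much of the harmonic kernel can concentrate at a bad ball and the uniformity of the Moser constants across the covering — this is where the interplay between $\Lambda$, $C_S$, and the non-collapsing threshold $N$ becomes essential, and is the step that forces the quantitative smallness of $\epsilon$.
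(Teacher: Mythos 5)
Your Step 1 matches the paper, but the architecture of Steps 2--3 diverges from the paper's proof and contains two genuine gaps. First, the assertion that for $b^+$ ``one concludes $b^+\le3$ by a direct perturbation of the $s=0$ covariant-constant rigidity'' is unjustified, and $3$ is not even the bound the theorem claims (it claims $C(\Lambda)$). The hypothesis $s^2\mathrm{Vol}(M)<\epsilon$ only gives $\int_M|\nabla\omega|^2=-\tfrac{s}{3}\int_M|\omega|^2=\tfrac{|s|}{3}$ for each $L^2$-unit closed $\omega\in\bigwedge^+$, which is small in $L^2$ but does not make $\omega$ pointwise close to covariant-constant, because the metrics can bubble. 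The paper's Section \ref{SecExamples} constructs exactly the obstruction: two-ended ALE necks on which $\sup|\nabla\omega|\to\infty$ while $L^2(|\nabla\omega|)\to0$, so a single form can ``switch'' between different covariant-constant behaviors across a bubble, and the limit space can have up to $N(\Lambda)$ orbifold components each carrying $3$ independent limits. Handling this is the entire content of the paper's argument: it invokes the Tian--Viaclovsky convergence theory for Bach-flat metrics to produce a limit multifold with a $\Lambda$-controlled number of components, proves a uniform $L^\infty$ bound on $L^2$-unit closed sections (via the improved Kato inequality $|\nabla|\omega||^2\le\tfrac23|\nabla\omega|^2$, the resulting inequality $\triangle|\omega|^{1/2}\ge\tfrac{s}{6}|\omega|^{1/2}$, and Moser iteration with the assumed Sobolev constant) so that no $L^2$-mass of the forms escapes into the bubbles, and then applies LeBrun's rigidity on each scalar-flat limit component to get $b^+\le 3N(\Lambda)$. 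Your ``a bubble-type analysis shows each bad ball contributes only a bounded number of dimensions'' is precisely this step, asserted rather than proved; without a compactness theory for the metrics themselves (not just for the forms) I do not see how you close it.

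Second, your treatment of $b_1$ and $b^-$ --- which you yourself flag as the main obstacle, since the potentials $|\cRic|$ and $|W^-|$ are only in $L^2=L^{n/2}$ --- is both incomplete and unnecessary. The paper gets these for free: with $W^+=0$ the Chern--Gauss--Bonnet and signature formulas bound $|\chi|$ and $|\tau|$ individually in terms of $\Lambda$ and $\epsilon$ (this is the content of the remark following the theorem), and then $\tau=b^+-b^-$ and $\chi=2-2b_1+b^++b^-$ give $b^-\le b^++|\tau|$ and $2b_1=2+b^++b^--\chi$ once $b^+$ is controlled. Replacing your Steps 2--3 for the one-form and anti-self-dual cases by this two-line topological deduction removes the hardest part of your outline and is the route the paper actually takes.
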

{\bf Remark}.
In a celebrated work \cite{Gro}, Gromov bounded all betti numbers on compact Riemannian manifolds from pointwise bounds on the curvature tensor and a diameter bound.
Certain weakenings of this result are now well known, but they normally require $L^p$ bounds on curvature for some large $p$, as in \cite{Gao2} (which also requires a Sobolev constant), \cite{Y}, or \cite{PSW}, or still require some pointwise bound on at least Ricci curvature as in \cite{Gao1}.

{\bf Remark}. The proof produces a control on $b^+$ by partially imitating the argument from \cite{LeB}.
In that work $s\ge0$, while in our case we choose sequences of metrics so $s<0$ but $s\nearrow0$.
The inequalities $\chi+3\tau>-\Lambda^2$ and $s^2Vol<\epsilon$ automatically control both $\chi$ and $\tau$.
After $b^+$ is controlled, which we acheive below through analytic means, then all three betti numbers $b_1$, $b^+$, $b^-$ are controlled.

{\bf Remark}.
Given $s^2Vol(M)<\epsilon$, then the topological condition $\chi+3\tau>-\Lambda^2$ is equivalent to an $L^2$ condition on curvature: with $W^+=0$ the usual Chern-Gauss-Bonnet formulas give
\begin{eqnarray}
	\begin{aligned}
		-8\pi^2\left(\chi+3\tau\right)
		&\;=\;-\frac{1}{12}s^2Vol(M)\,+\,\int\frac{s^2}{24}\,+\,\frac12|\cRic|^2\,+\,\frac14|W^-|^2 \\
		&\;=\;-\frac{1}{12}s^2Vol(M)\,+\,\frac14\int|\Riem|^2.
	\end{aligned} \label{EqnRmBounds}
\end{eqnarray}
Control over $L^2(|\Riem|)$ gives control over $\chi$ and $\tau$, as $8\pi^2\chi=\int\frac14|\Riem|^2-|\cRic|^2$ and $-12\pi^2\tau=\int\frac14|W^-|^2$.

{\bf Remark}.
It is notable that the constant $C=C(\Lambda)$ does not depend on $C_S$.
The Sobolev constant is required in order to uniformly bound any closed section $\omega\in\bigwedge^+$, which has th effect of prohibiting $L^2(|\omega|)$ from accumulating inside the bubbbles---the bound on $C_S$ is completely immaterial, just as long as there is {\it some} bound.
We conjecture that Theorem \ref{ThmMain} is true without the assumption on the Sobolev constant.

\vspace{0.15in}

In Section \ref{SecExamples} our paper gives examples of various models for potential bubbles.
These models demonstrate that the standard elliptic techniques cannot produce results any better than those of Theorem \ref{ThmMain}.
They show that, even though $|\omega|$ can be uniformly bounded when $\omega\in\bigwedge^+$, $d\omega=0$, it is {\it im}possible that $|\nabla\omega|$ can be bounded.
Since a scalar flat limiting multifold might look like the one-pont union of several compact orbifolds, the behavior of $\omega$ vary from one component orbifold to the next by undergoing large changes within the bubbles themselves.

For instance consider the situation of Figure 1.
\begin{figure}
	\centering
	\begin{subfigure}[b]{0.45\textwidth}
		\includegraphics[scale=0.45,viewport=0 100 370 260,clip=true]{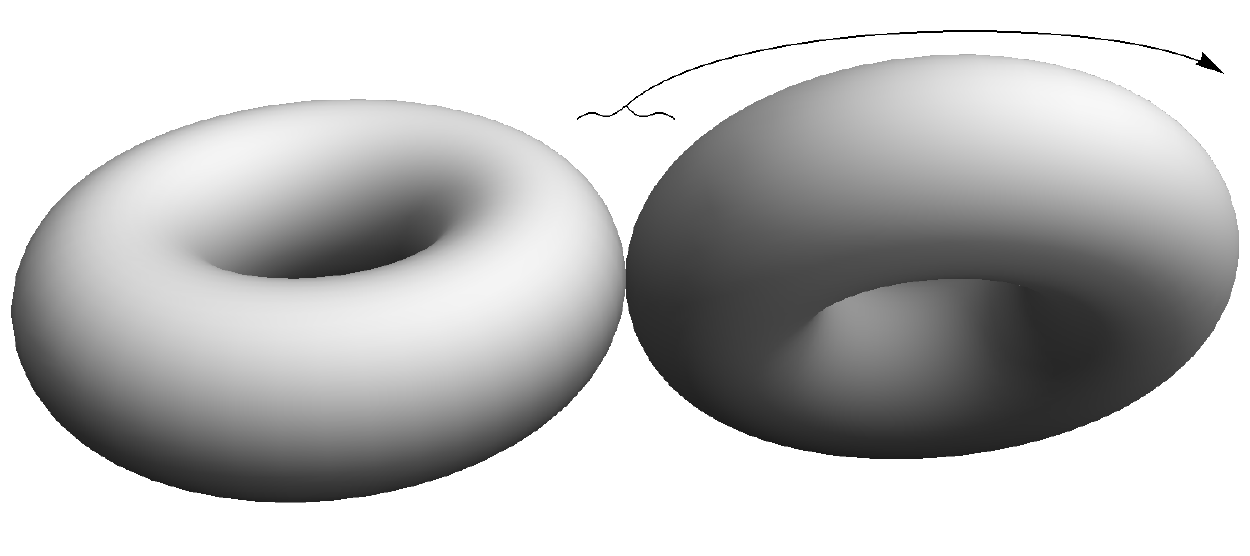}
		\caption{Two larger manifolds connected by a 2-ended bubble, converging to a one-point union.
			Closed form $\omega\in\bigwedge^+$ becomes covariant-constant on the components, but perhaps not in the bubble.}
	\end{subfigure}
	\hspace{0.2in}
	\begin{subfigure}[b]{0.45\textwidth}
		\reflectbox{\includegraphics[width=150px,height=75px,viewport=0 125 500 450,clip=true]{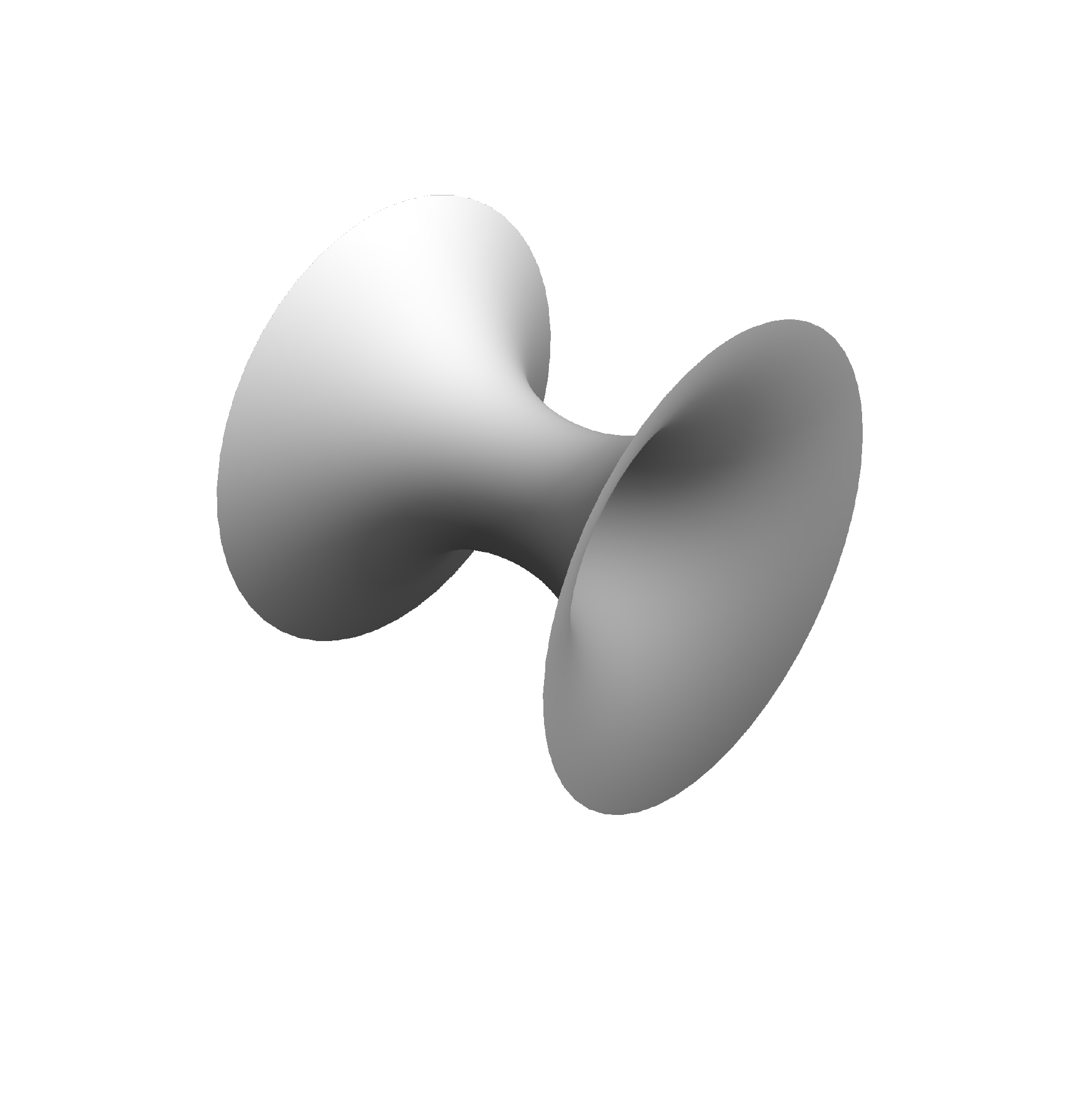}}
		\caption{Two-ended singularity model.
			Scalar-flat, half-conformally flat, ALE manifold with bounded, asymptotically K\"ahler but non-constant solution $\omega\in\bigwedge^+$, $d\omega=0$.
			\textcolor{white}{hi \\}}
	\end{subfigure}
	\caption{Possible degeneration as $s\nearrow0$.}
\end{figure}
Each component of Figure 1(a), in the limit, is scalar-flat and anti-self dual, and so $\omega$ is covariant-constant on each component.
But there can be a ``switching'' behavior within the bubble itself, represented in Figure 1(b), where $\omega$ rapidly changes from one covariant-constant form of behavior to a different covariant-constant form of behavior.
The form $\omega$ might even switch from a covariant-constant, non-zero form on one component to the zero form on the other.

We close the paper with Section \ref{SecExamples} where we show this kind of switching behavior cannot be ruled out.
In Section \ref{SubSec2EndedAK} we build examples of scalar-flat, half-conformally flat 2-ended ALE manifolds (as in Figure 1(b)) with a closed, bounded section $\omega\in\bigwedge^+$ that has different asymptotic behavior on the two ends of the manifold.
To do so, we must solve $d\omega=0$, which is a first order overdetermined system, with certain boundary conditions.
Solving overdetermined first order systems with boundary conditions is usually not easy, but we are able to do this under certain limited conditions by employing a separation of variables method in Section \ref{SubSecSeparation}.
After separating variables, we obtain the following overdetermined evolution equation
\begin{eqnarray}
	div(\eta)\;=\;0, \quad\quad \dot\eta\;=\;curl(\eta) \label{EqnEuclMax}
\end{eqnarray}
where $\eta\in\bigwedge^1_{\mathbb{S}^3}$ is a time-varying field of 1-forms on $\mathbb{S}^3$.
These are the so-called {\it Euclidean-Maxwell} equations, named such because, setting $\eta=B+\sqrt{-1}E$ where $B$ is the magnetic field and $E$ is the electric field, the source-free Maxwell equations on $\mathbb{R}^{1,3}$ are preciesly $div(\eta)=0$, $\dot\eta=\sqrt{-1}curl(\eta)$, so equations (\ref{EqnEuclMax}) are just the Wick rotations of the standard Maxwell equations.

In Theorem \ref{PropDecayGapOfForms} we find that solutions $\omega\in\bigwedge^+$, $d\omega=0$ on ALE manifolds display a distinct gap in their possible asymptotic behaviors.
If the norm $|\omega|$ is bounded on an ALE end, then its asymptotic behavior falls strictly into one of two types: it is either {\it asymptotically K\"ahler}, which means that upon taking a blowdown of the end the form converges to a K\"ahler form, or else its decay rate is fast: $|\omega|=O(r^{-4})$.
Decay rates between $O(1)$ and $O(r^{-4})$ are forbidden, even though all other integer orders of decay $O(r^{-5})$, $O(r^{-6})$, etc, may occur.
The fundamntal reason for this can be traced to the fact that the operator $*d:\bigwedge^1_{\mathbb{S}^3}\rightarrow\bigwedge^1_{\mathbb{S}^3}$, when restricted to divergence-free forms, has a spectral gap: $spec(*d)=\mathbb{Z}\setminus\{-1,0,1\}$.
\begin{theorem}[cf. Theorem \ref{PropDecayGapOfForms}]
	Assume $\omega$ solves $\omega\in\bigwedge^+$, $d\omega=0$ on an ALE manifold end $\Omega$ of dimension 4, and that $\Omega$ is ALE of order at least 2.
	Then either $\omega$ is asymptotically K\"ahler, or else $|\omega|\in{}O(r^{-4})$.
\end{theorem}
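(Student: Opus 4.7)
The plan is to reduce $d\omega=0$ on a self-dual 2-form to an evolution equation on the link $\mathbb{S}^3/\Gamma$ of the ALE end, then exploit the spectral gap advertised by the authors. On the model $\mathbb{R}^4/\Gamma$, set $t=\log r$ so that $g_{\mathrm{Eucl}}=e^{2t}(dt^2+g_{\mathbb{S}^3/\Gamma})$; in dimension 4 self-duality on 2-forms is conformally invariant, so I can work on the cylinder $(\mathbb{R}\times\mathbb{S}^3/\Gamma,\,dt^2+g_{\mathbb{S}^3/\Gamma})$. Any self-dual 2-form there decomposes uniquely as $\omega=dt\wedge\alpha+*_{\mathbb{S}^3/\Gamma}\alpha$ for a $t$-dependent 1-form $\alpha$ on $\mathbb{S}^3/\Gamma$.

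Writing $d\omega$ out and separating the $dt$-part from the pure spatial part shows that $d\omega=0$ is equivalent to the Euclidean--Maxwell system
\begin{equation*}
\delta_{\mathbb{S}^3/\Gamma}\alpha\;=\;0,\qquad \partial_t\alpha\;=\;*_{\mathbb{S}^3/\Gamma}d_{\mathbb{S}^3/\Gamma}\alpha,
\end{equation*}
which is precisely (\ref{EqnEuclMax}). I then expand $\alpha(t)=\sum_k c_k(t)\alpha_k$ in eigenmodes of $*d$ acting on divergence-free 1-forms. These eigenforms on $\mathbb{S}^3/\Gamma$ lift to eigenforms on $\mathbb{S}^3$, so the allowed spectrum is a subset of $\mathrm{spec}(*d|_{\ker\delta})=\mathbb{Z}\setminus\{-1,0,1\}$. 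Each mode evolves as $c_k(t)=c_k(0)e^{\lambda_k t}$, i.e.\ $c_k(r)=c_k(1)\,r^{\lambda_k}$. Pulling back to the Euclidean metric via the conformal factor $e^{-4t}=r^{-4}$ on 2-form norms, the $k$-th contribution has $|\omega|_{g_{\mathrm{Eucl}}}\sim r^{\lambda_k-2}$. Thus a bounded solution has $\lambda_k\le 2$, and the spectral gap forces $\lambda_k\in\{2\}\cup\{-2,-3,-4,\dots\}$: either the leading mode is $\lambda_k=2$ (in which case $r\to\infty$ yields a covariant-constant, hence K\"ahler, limit) or $\lambda_k\le-2$ and $|\omega|=O(r^{-4})$. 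The forbidden rates $O(r^{-1}), O(r^{-2}), O(r^{-3})$ are precisely ruled out by the missing eigenvalues $1,0,-1$.

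To upgrade this to the genuine ALE end, I would use the assumption that the metric is ALE of order at least $2$, so that $g-g_{\mathrm{Eucl}}=O(r^{-2})$, and hence the coefficients of the elliptic system $(d+d^*)\omega=0$ on self-dual forms differ from the model coefficients by $O(r^{-2})$. Working in weighted Sobolev spaces $r^{\delta}L^p_k$ on the end, the Fredholm theory of asymptotically Euclidean elliptic operators gives that $(d+d^*)$ is an isomorphism between the appropriate spaces for all weights $\delta$ that avoid the critical set $\{\lambda_k-2:\lambda_k\in\mathrm{spec}(*d|_{\ker\delta})\}=\{\ldots,-6,-5,-4\}\cup\{0\}$. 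Given a solution $\omega$, peel off the leading mode by projecting onto eigenforms of $*d$ on cross-spheres $\{r=r_0\}$; the remainder satisfies the same system with strictly faster asymptotic decay, and the perturbation of order $r^{-2}$ is small relative to the spectral gap in $\mathrm{spec}(*d)$, so it cannot bridge a forbidden rate.

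The main obstacle is the final step: controlling how the $O(r^{-2})$ deviation of the ALE metric from Euclidean interacts with the separation of variables. The perturbation couples different spherical modes and mixes the radial ODEs for the coefficients $c_k$, and one must show this mixing cannot resurrect a forbidden decay rate. The width of the spectral gap (three integer units between the Kähler rate $\lambda=2$ and the next allowed rate $\lambda=-2$) is comfortably larger than the $O(r^{-2})$ perturbation, which is what makes the dichotomy rigid; executing the perturbative argument cleanly---most likely via a contraction-mapping or weighted-elliptic-regularity argument on dyadic annuli---is the technical heart of the proof.
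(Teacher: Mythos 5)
Your analysis on the exact flat model is correct and closely parallels the machinery the paper itself builds: the reduction of $d\omega=0$ to $\operatorname{div}\eta=0$, $\dot\eta=\operatorname{curl}\eta$ is Proposition \ref{PropEvolCorrespond}, the mode expansion with radial factors $t^{\lambda-2}$ is Proposition \ref{PropCorrespondance}, and the exclusion of $\lambda\in\{-1,0,1\}$ is Theorem \ref{ThmEigenvaluesRestriction}. The dichotomy you read off from the spectrum --- leading rate $r^{0}$ (the $\lambda=2$ K\"ahler mode) versus $r^{-4}$ and faster --- is exactly the gap the theorem asserts. (Two small points: your cylinder evolution $\partial_t\alpha=*d\alpha$ differs from the paper's $\partial_u\eta=*d\eta-2\eta$ only by the choice of normalization of $\eta$, and your final claim that a pure $\lambda=2$ mode is covariant-constant still needs the integration-by-parts argument of Theorem \ref{ThmEigenvaluesRestriction}, not just the word ``K\"ahler''.)

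The genuine gap is the step you yourself flag as ``the technical heart'': passing from the flat model to the actual ALE end. You propose weighted Fredholm theory and a dyadic-annulus contraction to show the $O(r^{-2})$ metric perturbation cannot mix modes across the forbidden band, but you do not execute it, and the assertion that the perturbation is ``small relative to the spectral gap'' is heuristic --- one must actually rule out shifted indicial roots and logarithmic corrections, and control the accumulated mode-mixing over infinitely many annuli. The paper avoids this entirely by a scalar argument: it conformally compactifies the end ($g_c\approx\rho^{-4}g$, so $|\omega|_{g_c}=\rho^4|\omega|_g$ has an a priori pole of order $r^{-4}$), then applies the improved Kato inequality of Lemma \ref{LemmaImprovedElliptic} to get $\triangle\sqrt{|\omega|_{g_c}}\ge\frac{s_c}{6}\sqrt{|\omega|_{g_c}}$ with $\sqrt{|\omega|_{g_c}}=O(r^{-2})$; standard elliptic theory then forces $\sqrt{|\omega|_{g_c}}=Cr^{-2}+O(1)$, giving the dichotomy $C=0$ (case $|\omega|_g=O(\rho^{-4})$) versus $C\ne0$ ($|\omega|_g$ asymptotically constant) on the genuine ALE metric, with no perturbation theory at all. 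The exponent $\frac12$ in the improved Kato inequality is precisely what turns the $r^{-4}$ pole into the borderline simple pole $r^{-2}$ for a subsolution in dimension 4; only after this reduction does the paper invoke the flat-model series expansion, on the exact blowdown limit where your analysis applies verbatim. If you want to complete your route you must supply the weighted-elliptic perturbation argument; alternatively, adopting the compactification-plus-Kato step replaces it with three lines.
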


{\bf Remark}.
Our examples show that although we can bound $|\omega|$ inside the bubbles, it is quite impossible to bound $|\nabla\omega|$ inside the bubbles.
At the end of Section \ref{SubSec2EndedAK} we produce a 2-ended ALE manifold with a closed form $\omega\in\bigwedge^+$, with $\sup|\omega|=1$ and $L^2(|\nabla\omega|)$ as small as desired, but simultaneaously $\sup|\nabla\omega|$ is as large as desired.

{\bf Remark}.
The Euclidean-Maxwell equations have seen sparse study in the physics literature, in \cite{Schw} \cite{Brill} \cite{Her2} and whimsical treatments in \cite{Zamp2} and \cite{Her1}.
They have seen {\it very} sparse study in the mathematics literature: the only specific mention we could locate in the mathematics literature is a passing note (Example 4.1) in the lecture notes \cite{Bry}.
The physical motivation appears to be the development of a Euclidean theory of EM fields, in the hope that a treatment in Euclidean space-time would yield a more rigorous convergence theory, with the lessons learned potentially carrying over to the Lorenzian world---a hope, it appears, that was never fully realized, largely due to the fact that stable modes are rare, and also to the lack of conservation phenomena.
The papers \cite{Brill} \cite{Her2} are theoretically underdeveloped---even though the older paper of Schwinger's \cite{Schw} is more impressive.
They treat the Euclidean-Maxwell in a naive vector-calculus viewpoint and even after repeating Minkowski's trick \cite{Mink} of placing the Euclidean components of $E$, $B$ into a Maxwell 2-form $F$, it is never noticed that the Euclidean-Maxwell equations decouple.
Indeed the free-space Maxwell equations $dF=0$ and $d*F=0$ split into decoupled equations $dF^+=0$, $dF^-=0$ in the Euclidean but not in the Lorenzian case, as a consequence of the fact that the real vector space $\bigwedge^2_{\mathbb{R}^4}$ splits under the Euclidean but not the Lorenzian Hodge-$*$ operator.

\section{Manifold convergence and linear analysis}

In this section we prove Theorem \ref{ThmMain}.
A crucial factor is the Bochner formula for 2-forms on a 4-manifold:
\begin{eqnarray}
	\triangle\omega
	\;=\;-\triangle_d\omega
	\,-\,W(\omega)\,+\,\frac{s}{3}\omega \label{EqnMainBochner}
\end{eqnarray}
where $\triangle=+div\,grad$ is the rough Laplacian and $\triangle_d=(d^*+d)^2$ is the Hodge Laplacian.
If $\omega\in\bigwedge^+$ is an harmonic representative and $W^+=0$, then (\ref{EqnMainBochner}) is $\triangle\omega=\frac{s}{3}\omega$.
For a scalar version of this equality, we use the classic Kato inequality $\triangle|\omega|\ge\left<\triangle\omega,\omega/|\omega|\right>$ to obtain
\begin{eqnarray}
	\triangle|\omega|\;\ge\;\frac{s}{3}|\omega|
\end{eqnarray}
which holds in the pointwise sense when $\omega$ is non-zero and holds everywhere in, for instance, the distributional or the viscosity sense.

But we can do better: we have an improved Kato inequality for closed sections of $\bigwedge^+$.
If $\omega$ is such a section, then
\begin{eqnarray}
|\nabla|\omega||^2\;\le\;\frac23|\nabla\omega|^2. \label{IneqImprovedKato}
\end{eqnarray}
This inequality first appeared in \cite{Sea}, and also follows from the more general work in \cite{Bra} and \cite{CGH}.
This improved Kato inequality allows for an improved elliptic inequality, and then a better regularity theorem, Proposition \ref{PropRegularityForEta} below.
\begin{lemma}[Improved elliptic inequality] \label{LemmaImprovedElliptic}
	If $\omega\in\bigwedge^+$ is closed, then
	\begin{eqnarray}
		\triangle|\omega|^{\frac12}\ge\frac{s}{6}|\omega|^{\frac12}.
	\end{eqnarray}
\end{lemma}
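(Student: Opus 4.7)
The plan is to combine three ingredients: the Bochner formula~(\ref{EqnMainBochner}) applied to $\omega$, the chain rule identity expressing $\triangle|\omega|^{1/2}$ in terms of $\triangle|\omega|$ and $|\nabla|\omega||^2$, and the improved Kato inequality~(\ref{IneqImprovedKato}). The extra factor gained from the improved Kato inequality is precisely what is needed for the gradient terms to cancel and yield the better exponent $\tfrac{s}{6}|\omega|^{1/2}$ rather than the naive $\tfrac{s}{3}|\omega|$.

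First I would note that a closed self-dual form is automatically co-closed: $d^*\omega=-*d*\omega=-*d\omega=0$, so $\triangle_d\omega=0$. Combined with $W^+=0$, (\ref{EqnMainBochner}) reduces to $\triangle\omega=\tfrac{s}{3}\omega$. Write $u=|\omega|$. Expanding $\tfrac12\triangle u^2$ in the two natural ways and equating,
$$
u\triangle u+|\nabla u|^2
\;=\;\langle\triangle\omega,\omega\rangle+|\nabla\omega|^2
\;=\;\tfrac{s}{3}u^2+|\nabla\omega|^2,
$$
so at points where $u>0$,
$$
\triangle u\;=\;\tfrac{s}{3}u\,+\,\frac{|\nabla\omega|^2-|\nabla u|^2}{u}.
$$
The improved Kato inequality~(\ref{IneqImprovedKato}) rearranges as $|\nabla\omega|^2-|\nabla u|^2\ge\tfrac12|\nabla u|^2$, so this upgrades to $\triangle u\ge \tfrac{s}{3}u+\tfrac12|\nabla u|^2/u$.

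Next, I would apply the pointwise chain rule $\triangle u^{1/2}=\tfrac12 u^{-1/2}\triangle u-\tfrac14 u^{-3/2}|\nabla u|^2$. Substituting the preceding bound, the two gradient contributions cancel exactly:
$$
\triangle u^{1/2}\;\ge\;\tfrac{s}{6}u^{1/2}\,+\,\tfrac14 u^{-3/2}|\nabla u|^2\,-\,\tfrac14 u^{-3/2}|\nabla u|^2\;=\;\tfrac{s}{6}u^{1/2},
$$
as claimed.

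The one mildly technical point is the zero set of $\omega$, where $u^{1/2}$ is not smooth. I would address this in the standard way, replacing $u$ by $u_\epsilon=(|\omega|^2+\epsilon)^{1/2}$, redoing the above computation with the resulting harmless $\epsilon$-corrections, and passing $\epsilon\searrow 0$ to recover the inequality in the distributional (or viscosity) sense the excerpt already indicates is the relevant one. This is the main obstacle if anything, but it is standard; the substantive content is the clean cancellation produced by the improved Kato constant $\tfrac23$.
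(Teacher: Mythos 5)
Your proof is correct and follows essentially the same route as the paper: both reduce the Bochner formula~(\ref{EqnMainBochner}) to $\triangle\omega=\tfrac{s}{3}\omega$, expand $\triangle|\omega|^{1/2}$ via the chain rule for powers, and observe that the improved Kato inequality~(\ref{IneqImprovedKato}) makes the gradient terms cancel (the paper just packages the same computation as a single identity for $\triangle|\omega|^{1-\delta}$ with $\delta=\tfrac12$). Your explicit handling of the zero set of $\omega$ is a welcome addition that the paper leaves implicit.
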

\begin{proof}
	This type of observation originates in \cite{BKN}.
	We calculate
	\begin{eqnarray*}
		\begin{aligned}
			&\frac{1}{1-\delta}|\omega|^{1+\delta}\triangle{}|\omega|^{1-\delta}
			\;=\;-\delta{}|\nabla|\omega||^2+|\eta|\triangle|\omega| \\ &\quad\quad\quad\quad\;=\;
			|\nabla\omega|^2-(1+\delta)|\nabla|\omega||^2+\left<\omega,\triangle\omega\right>.
		\end{aligned}
	\end{eqnarray*}
	Using $\delta=\frac12$ with (\ref{IneqImprovedKato}) gives $2|\omega|^{\frac32}\triangle|\omega|^{\frac12}\ge\left<\omega,\triangle\omega\right>$.
	Finally, use $\triangle\omega=\frac{s}{3}\omega$.
\end{proof}

\subsection{Convergence theory of half-conformally flat manifolds}

Tian-Viaclovsky undertook a systematic study of the convergence behavior of Bach-flat manifolds---a class of manifolds that includes half-conformally flat manifolds---assuming volume growth lower bounds in \cite{TV1}\cite{TV2}\cite{TV3}.
As is well-known, volume growth lower bounds are implied by a bound on the Sobolev constant, so we can certainly utilize these results.
For us, the three most useful results of their study, all quoted from \cite{TV3}, are
\begin{theorem}[Volume upper bounds in terms of volume lower bounds] \label{ThmTVVolUpper}
	Assume $(M^4,g)$ is a compact Bach-flat manifold with constant scalar curvature $s$.
	Assume there is a constant $K$ so $|\chi|$, $|\tau|$, and $max_{M^4}|s|$ are bounded by $K$, and so $Vol\,B_r(p)>\frac1Kr^4$ for all $p\in{M}^4$ and all $r$ for which $Vol\,B_r(p)<\frac12Vol(M^4)$.
	Then there is some $C'$ dependent only on $K$ so that for all $r>0$ we have
	\begin{eqnarray}
		Vol\,B_r(p)\;\le\;C'r^4.
	\end{eqnarray}
\end{theorem}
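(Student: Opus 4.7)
The approach I would take is the standard $\epsilon$-regularity plus point-selection scheme in the style of Anderson and Tian--Viaclovsky, combined with a blow-up argument to handle small scales near bubble regions. The principal analytic input is a Tian--Viaclovsky type $\epsilon$-regularity theorem for Bach-flat constant-scalar-curvature metrics: there is a universal $\epsilon_0 > 0$ such that $\int_{B_{2r}(x)}|\Riem|^2 < \epsilon_0$ implies the pointwise bound $\sup_{B_r(x)}|\Riem| \le C r^{-2}\bigl(\int_{B_{2r}(x)}|\Riem|^2\bigr)^{1/2}$.

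First I would bound the total curvature energy. By the Chern--Gauss--Bonnet and signature formulas (cf.~(\ref{EqnRmBounds})),
\begin{eqnarray*}
	\int_M |\Riem|^2 \;\le\; C(|\chi|+|\tau|) \,+\, C\, s^2\, Vol(M),
\end{eqnarray*}
so a uniform bound on $\int_M|\Riem|^2$ requires a uniform bound $Vol(M) \le C(K)$. This is extracted via a covering argument: cover $M^4$ by maximally disjoint balls of a fixed radius $r_0$, each of volume at least $r_0^4/K$ by the non-collapsing hypothesis, and bound the number of such balls using the $\epsilon$-regularity threshold combined with Bishop--Gromov comparison on sub-balls where curvature is controlled, with a small bootstrap to close the loop.

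With $\int_M|\Riem|^2 \le C(K)$ in hand, a standard point-selection argument locates at most $N \le C(K)/\epsilon_0$ ``bubble points'' $\{x_1,\ldots,x_N\} \subset M$, outside of which $|\Riem|$ --- and hence $|\Ric|$, using the constant scalar curvature together with $|\cRic| \le |\Riem|$ --- is pointwise bounded. Off the union of small balls about these points, Bishop--Gromov directly yields $Vol\,B_r(p) \le C(K)\,r^4$.

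The main obstacle is near the bubble points. I would argue by contradiction: suppose there exist $p_i \in M$ and $r_i > 0$ with $V_i := Vol\,B_{r_i}(p_i)/r_i^4 \to \infty$, chosen to approximately maximize this ratio at each scale. If the $r_i$ stay bounded below, the almost-maximality combined with $\epsilon$-regularity in annular shells around a bubble point and the uniform local $L^2$-curvature budget would give a contradiction. If $r_i \to 0$, rescale $g_i := r_i^{-2} g$: the rescaled scalar curvature tends to $0$, the $L^2$-curvature energy is scale-invariant, and the non-collapsing lower bound persists. Tian--Viaclovsky compactness then extracts a Bach-flat scalar-flat ALE orbifold limit $(X_\infty,g_\infty,p_\infty)$, on which standard ALE theory forces Euclidean volume growth $Vol\,B_r \le C r^4$, contradicting $V_i \to \infty$. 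The case $r_i \to \infty$ is immediate, since $Vol\,B_r(p) \le Vol(M) \le C(K)$.
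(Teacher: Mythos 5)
The paper's own proof of this theorem is a one-line citation: it uses (\ref{EqnRmBounds}) together with the hypotheses on $|\chi|$, $|\tau|$, $|s|$ to obtain a uniform bound on $\int_M|\Riem|^2$, and then invokes Theorem 1.3 of \cite{TV3} verbatim. Your proposal instead attempts to reconstruct the Tian--Viaclovsky volume growth theorem from scratch via $\epsilon$-regularity, point selection, and a blow-up/contradiction argument. That is the correct general shape of how such results are proved, and you deserve credit for noticing a point the paper glosses over: since (\ref{EqnRmBounds}) contains the term $s^2\,Vol(M)$, the bound $|s|\le K$ alone does not yield $\int_M|\Riem|^2\le C(K)$ without a separate bound on $Vol(M)$ (in the paper's actual application this is harmless because the metrics are normalized to $Vol(M_i)=1$, but as a free-standing statement the hypothesis is needed).

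However, as a self-contained proof your outline has two genuine gaps. First, the step that produces $Vol(M)\le C(K)$ is described as a covering argument ``with a small bootstrap to close the loop,'' but the loop is not closed: you need $\int_M|\Riem|^2\le C(K)$ to run $\epsilon$-regularity and Bishop--Gromov on the covering balls, and you need $Vol(M)\le C(K)$ to get that $L^2$ bound from Chern--Gauss--Bonnet --- this is exactly the circularity that must be broken, and you do not say how. Second, in the blow-up step you assert that Tian--Viaclovsky compactness yields an ALE orbifold limit with Euclidean volume growth; but the ALE structure of blow-up limits of Bach-flat metrics is itself one of the main theorems of \cite{TV1}--\cite{TV3}, and its proof is intertwined with (indeed, in places relies on) the very volume growth upper bound you are trying to establish. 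So the argument as written either silently re-imports the cited theorem or risks circularity. The honest options are to cite Theorem 1.3 of \cite{TV3} as the paper does, or to commit to reproducing its proof in full, in which case the order of quantifiers and the dependence of each step on the others must be made explicit.
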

\begin{proof}
	Using (\ref{EqnRmBounds}) to get uniform bounds on $L^2(|\Riem|)$, we use theorem 1.3 of \cite{TV3} to obtain the result.
\end{proof}

\begin{theorem}[$\epsilon$-Regularity for Bach-flat manifolds]
	Assume $(M^4,g)$ is a compact Bach-flat manifold with constant scalar curvature $s$.
	Assume there is a constant $K$ so $|\chi|$, $|\tau|$, and $\max_{M^4}|s|$ are bounded by $K$, and so $Vol\,B_r(p)>\frac1Kr^4$ for all $p\in{M}^4$ and all $r>0$ for which $Vol\,B_r(p)<\frac12Vol(M^4)$.
	Then there is some $\epsilon_0>0$ so that $\int_{B_r(p)}|\Riem|^2<\epsilon_0$ implies
	\begin{eqnarray}
		\sup_{B_{r/2}(p)}|\nabla^k\Riem|\;\le\;C'r^{-2-k}\left(\int_{B_r(p)}|\Riem|^2\right)^{\frac12}
	\end{eqnarray}
	for some $C'=C'(K,k)$.
\end{theorem}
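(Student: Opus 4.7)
The plan is to run Moser iteration on the Riemann tensor, starting from a Weitzenbock-type equation derived from the Bach-flat and constant scalar curvature hypotheses. I first observe that the claimed inequality is scale-invariant: under $g\mapsto\lambda^{-2}g$, both sides transform by the same factor $\lambda^{2+k}$, so I may assume $r=1$ throughout.

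The first real step is to produce a second-order elliptic system for $\Riem$. Constant scalar curvature, via the twice-contracted second Bianchi identity, forces $\delta\Ric=0$; the once-contracted second Bianchi identity reduces, using constant $s$, to an expression of $\delta W$ as a first derivative of $\Ric$. Applying $\delta$ once more and substituting into the Bach-flat equation
\begin{eqnarray*}
	\nabla^k\nabla^l W_{ikjl} + \tfrac{1}{2}W_{ikjl}R^{kl} \;=\; 0
\end{eqnarray*}
produces, after commuting derivatives, a coupled second-order elliptic system of the schematic form $\Delta\Riem \;=\; \Riem*\Riem + s\cdot\Riem$. Combined with the standard Kato inequality this yields the pointwise differential inequality
\begin{eqnarray*}
	\Delta|\Riem| \;\ge\; -C|\Riem|^2 - CK|\Riem|,
\end{eqnarray*}
valid classically where $\Riem\neq 0$ and distributionally everywhere.

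The volume lower bound $Vol\,B_r\ge K^{-1}r^4$, valid on all balls of volume at most $\tfrac12 Vol(M)$, delivers via a Croke-type estimate a uniform Sobolev inequality on $B_1(p)$ with constant depending only on $K$. I would then multiply the differential inequality by a high power of $|\Riem|$ times a cutoff, integrate by parts, and apply Sobolev to obtain a reverse-H\"older chain between successive $L^q$ norms of $|\Riem|$. The main obstacle is the critical cubic term on the right-hand side, which is scale-invariant in dimension four: by H\"older it is bounded by $(\int_{B_1}|\Riem|^2)^{1/2}$ times a higher $L^q$ norm, and the smallness hypothesis $\int|\Riem|^2<\epsilon_0$, with $\epsilon_0$ chosen small in terms of the Sobolev constant (hence in terms of $K$), allows this term to be absorbed into the left-hand side. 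Iterating on the exponent then produces the $k=0$ bound $\sup_{B_{1/2}}|\Riem|\le C'(\int_{B_1}|\Riem|^2)^{1/2}$.

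For $k\ge 1$, once $|\Riem|$ is pointwise bounded on $B_{1/2}$, the system $\Delta\Riem=\Riem*\Riem+s\Riem$ becomes linear elliptic with bounded coefficients. Differentiating this system and applying standard $L^p$ and Schauder estimates on a nested sequence of shrinking balls $B_{1/2}\supset B_{3/8}\supset\cdots$ upgrades the $L^\infty$ bound on $\Riem$ to $L^\infty$ bounds on each $\nabla^k\Riem$, still controlled by $(\int_{B_1}|\Riem|^2)^{1/2}$. Undoing the initial rescaling restores the claimed $r$-dependence.
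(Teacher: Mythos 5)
The paper does not prove this statement from scratch: its entire proof is to use the Chern--Gauss--Bonnet and signature formulas (\ref{EqnRmBounds}) to convert the hypotheses on $\chi$, $\tau$, $s$ into a uniform bound on $\int_M|\Riem|^2$, and then to quote Theorem 1.2 of Tian--Viaclovsky \cite{TV3} verbatim. Your proposal instead tries to reconstruct that theorem. The overall outline (Bach-flat plus constant scalar curvature gives a second-order elliptic system $\triangle\Riem=\Riem*\Riem+\cdots$, then Moser iteration with the critical cubic term absorbed by the smallness of the scale-invariant quantity $\int|\Riem|^2$, then bootstrapping for $k\ge1$) is indeed the standard skeleton behind such $\epsilon$-regularity results.

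However, there is a genuine gap at the step where you claim that the lower volume growth bound $Vol\,B_r(p)>\frac1K r^4$ ``delivers via a Croke-type estimate a uniform Sobolev inequality'' with constant depending only on $K$. This implication is false in general, and the paper itself is careful about the direction of the implication: it notes only that a Sobolev constant bound \emph{implies} volume growth lower bounds, not the converse. A dumbbell with a short, thin neck of width $w$ satisfies Euclidean-type volume lower bounds at every point and scale, yet its Cheeger/isoperimetric constant degenerates like $w^{3}$, so no uniform Sobolev inequality holds on balls containing the neck. Replacing the Sobolev hypothesis of the earlier Tian--Viaclovsky $\epsilon$-regularity theorem by a volume growth hypothesis is precisely the main technical achievement of \cite{TV3}, and it is not obtained by deriving a Sobolev constant; without that input your Moser iteration cannot start. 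If you are willing to \emph{assume} a Sobolev bound (as Theorem \ref{ThmMain} does, via (\ref{IneqSobolev})), your argument becomes the classical one, but then you have proved a weaker statement than the one asserted. Two smaller points: your reduction to $r=1$ changes the zeroth-order coefficient to $r^2 s$, so you should note why $r$ is a priori bounded (e.g.\ via the volume upper bound of Theorem \ref{ThmTVVolUpper} and $Vol(M)<\infty$); and you never use the hypotheses $|\chi|,|\tau|\le K$, which in the paper's citation-based proof are exactly what supply the global $L^2$ curvature bound required by the quoted theorem.
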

\begin{proof}
	Using \ref{EqnRmBounds} to uniformly bound $L^2(|\Riem|)$, this is theorem 1.2 of \cite{TV3}.
\end{proof}

\begin{theorem}[Convergence of Bach-flat manifolds] \label{ThmTVConvergence}
	Assume $(M_i^4,g)$ is a sequence of compact Bach-flat manifold with constant scalar curvature $s_i$.
	Assume there is a constant $\Lambda$ so
	\begin{eqnarray}
		L^2(|\Riem{}_i|)\le\;\Lambda \quad \text{and} \quad |s_i|<\Lambda.
	\end{eqnarray}
	Also assume $Vol\,B_r(p)>\frac1Kr^4$ for all $p\in{M}^4$ and all $r>0$ for which $Vol\,B_r(p)<\frac12Vol(M_i^4)$.
	Assume further that $Vol(M_i^4)=1$.
	
	Then a subsequence of the $(M_i^4,g_i)$ converges in the Gromov-Hausdorff sense to a 4-dimensional Riemannian multifold $(M_\infty^4,g_\infty)$ with $Vol(M_\infty^4)=1$, and where the number of multifold points is bounded is uniformly bounded in terms of $\Lambda$, and the multiplicity of each multifold point is bounded in terms of $\Lambda$.
\end{theorem}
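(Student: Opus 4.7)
The plan is to combine the two preceding Tian--Viaclovsky theorems with a standard concentration--compactness and bubble-tree argument. First, observe that the assumption $L^2(|\Riem{}_i|)\le\Lambda$ together with (\ref{EqnRmBounds}) and $|s_i|<\Lambda$ yields uniform bounds on $|\chi(M_i)|$ and $|\tau(M_i)|$, so the hypotheses of Theorems \ref{ThmTVVolUpper} and the preceding $\epsilon$-regularity theorem are in force. Combined with the non-collapsing bound $Vol\,B_r(p)\ge\frac{1}{K}r^4$, this produces two-sided volume bounds $\tfrac{1}{K}r^4\le Vol\,B_r(p)\le C'r^4$ for all relevant scales, and together with $Vol(M_i^4)=1$ gives a uniform diameter bound $\mathrm{diam}(M_i^4)\le D_0(\Lambda,K)$. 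By Gromov's precompactness theorem a subsequence converges in Gromov--Hausdorff sense to a compact length space $(M_\infty^4,d_\infty)$.

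Next, define the curvature concentration set. Let $\epsilon_0$ be the constant from the $\epsilon$-regularity theorem, and for each $i$ choose a maximal collection $\mathcal{S}_i\subset M_i^4$ of points with the property that every open neighborhood carries at least $\epsilon_0/2$ units of $L^2$-curvature mass in the limit. Since $\int_{M_i}|\Riem|^2\le 4\Lambda^2+\tfrac{1}{3}\Lambda^2 Vol(M_i)$ is uniformly bounded by (\ref{EqnRmBounds}), a standard covering argument gives $|\mathcal{S}_i|\le N(\Lambda)$, and after passing to a further subsequence the $\mathcal{S}_i$ Gromov--Hausdorff converge to a finite set $\mathcal{S}_\infty\subset M_\infty^4$ of cardinality $\le N(\Lambda)$.

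Third, away from $\mathcal{S}_i$ the $\epsilon$-regularity theorem delivers uniform $C^k$ bounds on $\Riem{}_i$ on balls of definite size. Combined with the two-sided volume bounds and the harmonic-radius estimates of Anderson's $C^{1,\alpha}$-compactness theorem, we extract smooth convergence of $(M_i^4\setminus\mathcal{S}_i,g_i)$ to a smooth Riemannian manifold $(M_\infty^4\setminus\mathcal{S}_\infty,g_\infty)$ on compact subsets, with $g_\infty$ itself Bach-flat and of constant scalar curvature. Standard removable-singularities arguments for Bach-flat metrics, combined with the volume bound $Vol\,B_r(p)\le C'r^4$ at a singular point, show that $g_\infty$ extends across each $p\in\mathcal{S}_\infty$ as a smooth orbifold or multifold point.

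The main obstacle, and the heart of the argument, is the bubble-tree analysis at each $p\in\mathcal{S}_\infty$, which controls the multiplicity. At such a point one rescales the metrics by factors $\lambda_i\to\infty$ chosen so that curvature remains bounded but non-trivial at a fixed distance from the blow-up center; using $\epsilon$-regularity and the Bach-flat equation one extracts a complete non-compact ALE Bach-flat ``bubble'' absorbing at least $\epsilon_0$ units of $L^2$ curvature. One then iterates: any curvature mass not accounted for in the first bubble produces secondary, deeper bubbles after further rescaling. The delicacy lies in verifying that this process terminates, that bubbles separate (no curvature is lost in the ``neck'' regions), and that each iteration consumes a definite amount of the finite total $L^2$ curvature. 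Since the total mass $\int|\Riem|^2$ is bounded by $O(\Lambda^2)$ and each bubble costs at least $\epsilon_0$, the depth and branching of the bubble tree at $p$ are bounded by a function of $\Lambda$, giving the stated bound on the multiplicity.
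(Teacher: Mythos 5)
Your proposal attempts to reprove the Tian--Viaclovsky convergence theorem from scratch via concentration--compactness and a bubble-tree analysis, whereas the paper simply deduces the convergence statement, the finiteness of the singular set, and the multiplicity bound by citing Corollary 1.5 of \cite{TV3} (after using (\ref{EqnRmBounds}) to convert the topological bounds into the $L^2$ curvature bound that result requires). Your outline is architecturally the right one --- it is essentially the skeleton of the Tian--Viaclovsky proof --- but as written it is not a proof: the steps you yourself flag as ``the delicacy'' (no curvature loss in neck regions, termination of the bubble tree, removable singularities for Bach-flat metrics at the multifold points) are precisely the hard content of \cite{TV1}--\cite{TV3}, and you assert rather than establish them. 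In particular, the claim that each bubble ``costs at least $\epsilon_0$'' of $L^2$ curvature so that the tree has bounded depth requires the neck analysis to rule out curvature concentrating at scales between consecutive bubbles, and the extension of $g_\infty$ across a singular point as an orbifold point is a theorem, not a ``standard removable-singularities argument,'' in the Bach-flat setting. Either cite the Tian--Viaclovsky results directly, as the paper does, or accept that each of these steps needs its own proof.

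Separately, you never actually prove the one assertion of the theorem that the paper does argue for itself: $Vol(M_\infty^4)=1$. You derive the two-sided volume bounds $\frac1K r^4\le Vol\,B_r(p)\le C'r^4$, which is the right ingredient, but you should close the loop: the convergence is smooth outside $\delta$-balls about the finitely many singular points, the total volume of those balls is at most $N(\Lambda)\,C'\delta^4$ by Theorem \ref{ThmTVVolUpper}, and sending $\delta\searrow0$ gives continuity of volume in the limit. Without the upper volume bound this step would fail (volume could concentrate at the singular points), so it deserves an explicit sentence rather than being left implicit in the diameter-bound discussion.
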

\begin{proof}
	After using \ref{EqnRmBounds} to uniformly bound $L^2(|\Riem|)$, the convergence result is corollary 1.5 of \cite{TV3}.
	The statement that $Vol(M_\infty^4)=1$ follows from Theorem \ref{ThmTVVolUpper}, in the following way.
	The convergence $M_i^4\rightarrow{}M_\infty^4$ is uniform away from finitely many balls of fixed, but arbitrarily small radius around the multifold points.
	The sum of the volumes of these small balls, however, must be smaller than a multiple of the volumes of corresponding Euclidean balls, but the upper bound in Theorem \ref{ThmTVVolUpper}, which is uniformly small.
	In the limit, therefore, their volumes are zero and we have $C^\infty$ convergence outside these balls; therefore we retain continuity of volume as we pass to the limit in $M_i^4\rightarrow{}M_\infty^4$.
\end{proof}

\subsection{Linear analysis of closed sections of $\bigwedge^+$} \label{SubSecRegularityWithSobolev}

Having recalled the convergence theory of half-conformally flat manifolds of Tian-Viaclovsky, we turn to the convergence of closed sections of $\bigwedge^+$.
If $\omega\in\bigwedge^+$ and $d\omega=0$ then of course $d*\omega=0$ and so $\omega$ is Hodge-harmonic: $\triangle_d\omega=0$.
Equation (\ref{EqnMainBochner}) provides the linear elliptic equation $\triangle\omega=\frac{s}{3}\omega$, and Lemma \ref{LemmaImprovedElliptic} gives $\triangle\sqrt{|\omega|}\ge\frac{s}{6}\sqrt{|\omega|}$.
Assuming the Sobolev constant $C_S$ is bounded, the linear elliptic theory \cite{GT} gives the following $L^\infty$ bound.
\begin{proposition}[Regularity for $\eta$] \label{PropRegularityForEta}
	Suppose the ball $B_r$ has Sobolev constant $C_S$ in the sense of equation (\ref{IneqSobolev}).
	If $\omega\in\bigwedge^+$ is closed, then
	\begin{eqnarray}
		\begin{aligned}
			\sup_{B_{r/2}}|\omega|
			&\;\le\;C\cdot(C_S)^4\cdot{}e^{-sr^2/48}\cdot{}
			r^{-4}\cdot\int_{B_r}|\omega| \\
			&\;\le\;D\cdot{}e^{-sr^2/48}\cdot{}
			r^{-2}\cdot\left(\int_{B_r}|\omega|^2\right)^{\frac12} \label{IneqRegularityI}
		\end{aligned} \label{EqnEllipticBoundedness}
	\end{eqnarray}
	where $C$ is a universal constant and $D=D(C_S)$.
\end{proposition}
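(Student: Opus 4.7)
By Lemma~\ref{LemmaImprovedElliptic}, $u := |\omega|^{1/2}$ is a weak subsolution of $\triangle u - \tfrac{s}{6} u \ge 0$ on $B_r$. Regarding $V = -s/6$ as a bounded potential (non-negative in the setting of Theorem~\ref{ThmMain} where $s<0$), this is a standard linear elliptic inequality with a zeroth-order term, and the plan is to run Moser iteration on $u$. My target is the intermediate bound
\[
\sup_{B_{r/2}} u \;\le\; C\,C_S^{2}\,e^{-sr^2/96}\,r^{-2}\,\|u\|_{L^2(B_r)},
\]
which upon squaring---using $\|u\|_{L^2}^2 = \int |\omega|$, $C_S^{2}$ becoming $C_S^{4}$, and the exponent $-s/96$ doubling to $-s/48$---yields the first stated inequality. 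The second inequality then follows by rerunning the iteration with a higher base exponent (so that $\sum q_k^{-1}$ halves and the starting norm is $\|u\|_{L^4}^2 = \|\omega\|_{L^2}$), keeping the exponent of $C_S$ inside $D(C_S)$.

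\textbf{Paragraph 2.}
For $q \ge 2$ and a Lipschitz cutoff $\varphi$, testing the differential inequality against $u^{q-1}\varphi^2$ and integrating by parts yields
\[
(q-1)\int u^{q-2}|\nabla u|^2\varphi^2 \;\le\; 2\int u^{q-1}\varphi\,|\nabla u|\,|\nabla\varphi| \;+\; \tfrac{|s|}{6}\int u^q\varphi^2.
\]
A Cauchy--Schwarz with a small absorption parameter disposes of the cross term. After substituting $|\nabla u^{q/2}|^2 = (q^2/4)u^{q-2}|\nabla u|^2$ and using $|\nabla(u^{q/2}\varphi)|^2 \le 2|\nabla u^{q/2}|^2\varphi^2 + 2u^q|\nabla\varphi|^2$, the Sobolev inequality~(\ref{IneqSobolev}) converts this to the reverse H\"older step
\[
\|u^{q/2}\varphi\|_{L^4}^2 \;\le\; C_S^{2}\Bigl(B_1\,\|u^{q/2}\nabla\varphi\|_{L^2}^2 \;+\; B_2\,q\,|s|\,\|u^{q/2}\varphi\|_{L^2}^2\Bigr),
\]
with $B_1,B_2$ universal; the linear growth in $q$ on the potential term arises from the factor $q^2/(q-1)\approx q$.

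\textbf{Paragraph 3.}
Choose nested radii $r_k = (r/2)(1+2^{-k})$ with cutoffs $\varphi_k$ satisfying $|\nabla\varphi_k|_\infty \lesssim 2^k/r$, and doubling exponents $q_k = 2\cdot 2^k$. The recursion becomes
\[
\|u\|_{L^{q_{k+1}}(B_{r_{k+1}})} \;\le\; \Bigl(C_S^{2}\bigl(B_1\cdot 4^k r^{-2} + B_2\,q_k|s|\bigr)\Bigr)^{1/q_k}\|u\|_{L^{q_k}(B_{r_k})}.
\]
Taking logarithms, dividing by $q_k$, and summing telescopically: the accumulated $C_S$-exponent is $2\sum q_k^{-1} = 2$, producing $C_S^{2}$; the powers of $4^k$ and $r^{-2}$ combine through $\sum q_k^{-1}$ and $\sum k q_k^{-1} < \infty$ to produce the $r^{-2}$ scaling and an absolute constant; the $|s|$-dependent part is separated by $\log(a+b)\le \log a + b/a$, so its sum is bounded by $(B_2/B_1)|s|r^{2}\sum_k q_k^{-1}\cdot q_k\cdot 4^{-k} = (B_2/B_1)|s|r^{2}\sum_k 4^{-k}$, a convergent geometric series. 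Tuning the Cauchy--Schwarz absorption parameter and the iteration ratio to calibrate $(B_2/B_1)\cdot (4/3) = 1/96$ produces the exact coefficient $-s/96$ in the exponential on $u$, which squares to $-s/48$ on $|\omega|$.

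\textbf{Main obstacle.}
The essential delicacy is that the coefficient of $|s|$ in the Caccioppoli bound scales linearly in the iteration exponent $q_k$, so a naive insertion into the Moser summation would give a $\sum q_k^{-1}\log q_k$ contribution that is finite but messy. The remedy is the elementary $\log(a+b)\le \log a + b/a$, which separates the $\log q_k$ from the $|s|$-part and sends the latter into a rapidly convergent geometric series whose sum is a constant multiple of $|s|r^{2}$. Matching the explicit coefficient $1/48$ in the exponent is then bookkeeping---choosing the absorption parameter in Cauchy--Schwarz, the ratio $q_{k+1}/q_k$, and the radius shrinkage rate to hit the stated constant---rather than a conceptual hurdle.
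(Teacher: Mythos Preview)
Your proposal is correct and takes essentially the same approach as the paper: both apply Moser iteration to $u=|\omega|^{1/2}$ via the improved inequality $\triangle u\ge\tfrac{s}{6}u$ of Lemma~\ref{LemmaImprovedElliptic}, with doubling exponents and dyadically shrinking radii. The only difference is cosmetic bookkeeping on the zeroth-order term---the paper isolates it as a product $\prod_i(1+2^i\tfrac{-sr^2}{48})^{1/2^i}\le e^{-sr^2/48}$, whereas you separate it inside the log via $\log(a+b)\le\log a+b/a$; both devices reduce the linear-in-$q$ growth of the potential contribution to the same convergent geometric series in $|s|r^2$, and your observation that hitting the exact constant $1/48$ is calibration rather than substance is accurate.
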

\begin{proof}
We expect the Moser iteration process is familiar to most readers, so we are brief; see \cite{GT} for a fuller account.
Let $u$ be any function with gradient and Laplacian defined in the distributional sense; then given any $C^1$ function $\varphi$, for $p>1$ we have
\begin{eqnarray}
	\int\varphi^2|\nabla{}u^p|^2\;\le\;4\left(\frac{p-1}{2p-1}\right)^2\int|\nabla\varphi|^2u^{2p}\,-\,2p\frac{p-1}{2p-1}\int\varphi^2u^{2p-1}\triangle{}u \label{IneqGradEst}
\end{eqnarray}
and for $p=1$ we have $\int\varphi^2|\nabla{}u|^2\;\le\;4\int|\nabla\varphi|^2u^2-2\int\varphi^2u\triangle{}u$.
The Sobolev inequality gives
\begin{eqnarray}
	\begin{aligned}
		\left(\int\varphi^4u^{4p}\right)^{\frac12}
		&\;\le\;2C_S^2\int|\nabla\varphi|^2u^{2p}
		\,+\,2C_S^2\int\varphi^2|\nabla{u}^p|^2.
	\end{aligned} \label{IneqSobolevIneq}
\end{eqnarray}
Setting $u=\sqrt{|\omega|}$ so $u\triangle{}u\ge\frac{s}{6}u^2$ and combining (\ref{IneqGradEst}) and (\ref{IneqSobolevIneq}) we obtain
\begin{eqnarray}
	\begin{aligned}
		\left(\int\varphi^4|\omega|^{2p}\right)^{\frac12}
		&\le4C_S^2\int|\nabla\varphi|^2|\omega|^{p}
		+\frac13C_S^2p(-s)\int\varphi^2|\omega|^{p}
	\end{aligned} \label{IneqIterationFirstStep}
\end{eqnarray}
for $p\ge1$.
Now we create an iteration proceedure.
Set $p_i=2^i$, and $r_i=\frac12(1+2^{-i})r$.
Let $\varphi_i$ be a cutoff function with $\varphi\equiv1$ in $B_{r_i}$, $\varphi_i\equiv0$ outside of $B_{r_{i-1}}$, and $|\nabla\varphi_i|<4/(r_{i-1}-r_i)=4\cdot{}2^i$
Then (\ref{IneqIterationFirstStep}) becomes
\begin{eqnarray}
	\left(\int_{B_{r_i}}|\omega|^{2^{i+1}}\right)^{\frac{1}{2^{i+1}}}
	\le
	\left(16C_S^2r^{-2}4^i\right)^{\frac{1}{2^i}}\left(1+2^{i}\frac{-sr^2}{48}\right)^{\frac{1}{2^i}}\left(\int_{B_{r_i}}|\omega|^{2^i}\right)^{\frac{1}{2^i}}. \label{IneqMoserSecond}
\end{eqnarray}
One can prove that $\prod_i\left(1+\frac{-sr^2}{48}2^i\right)^{\frac{1}{2^i}}\le{}Exp(-sr^2/48)$ and so (\ref{IneqMoserSecond}) iterates to
\begin{eqnarray}
	\begin{aligned}
		\left(\int_{B_{r_i}}|\omega|^{2^{N+1}}\right)^{\frac{1}{2^{N+1}}}
		&\le
		\left[\prod_{i=0}^N\left(16C_S^2r^{-2}4^i\right)^{\frac{1}{2^i}}\right]Exp[-sr^2/48]
		\int_{B_{r_0}}|\omega| \\
		&\le\;C\cdot{}C_S{}^4\cdot{}Exp[-sr^2/48]\cdot{}r^{-4}\cdot{}\int_{B_r}|\omega|.
	\end{aligned} \label{IneqMoserThird}
\end{eqnarray}
Sending $N\rightarrow\infty$ gives inequality (\ref{IneqRegularityI}).
\end{proof}

\begin{lemma}[Convergence of sections] \label{LemmaConvOfSections}
	Assume $(M_i^4,g_i)$ is a sequnce of half-conformally flat manifolds satifying the conditions of Theorem \ref{ThmMain}.
	Assume that each manifold $i$ has a collection of $k$ many closed sections $\{\omega_i^1,\dots,\omega_i^k\}$ of $\bigwedge^+$, and that they are $L^2$-orthonormal: $\left<\omega_i^j,\omega_i^k\right>_{L^2}=\delta^{jk}$.
	
	Then a subseqeunce of the manifolds converges in the Gromov-Hausdorff sense to a scalar-flat multifold $M^4_\infty$, and, possibly passing to a further subequence, the sections $\omega_i^j$ converge to a collection of closed sections $\{\omega_\infty^1,\dots,\omega_\infty^k\}$ on the smooth portion of $(M_\infty^4,g_\infty)$.
	Further, each $\omega_\infty^j$ remains uniformly bounded, and we retain the $L^2$-orthonormality of the sections: $\left<\omega_{\infty}^j,\omega_\infty^k\right>_{L^2}=\delta^{jk}$.
\end{lemma}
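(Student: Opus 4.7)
My plan is to combine the Tian--Viaclovsky convergence theory (Theorems~\ref{ThmTVVolUpper}--\ref{ThmTVConvergence}) with the uniform $L^\infty$ control that Proposition~\ref{PropRegularityForEta} supplies for closed self-dual sections. Rescaling so that $\mathrm{Vol}(M_i^4)=1$ is harmless, since all of $s^2\mathrm{Vol}$, $L^2(|\Riem|)$, $\chi+3\tau$, and the four-dimensional Sobolev constant of (\ref{IneqSobolev}) are invariant under $g\mapsto\lambda^2 g$. Then (\ref{EqnRmBounds}) gives a uniform bound on $L^2(|\Riem_i|)$, the hypothesis $s_i^2\,\mathrm{Vol}(M_i^4)<\epsilon$ reads $|s_i|<\sqrt\epsilon$, and the Sobolev bound forces the required non-collapsing. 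Theorem~\ref{ThmTVConvergence} then produces, along a subsequence, a Gromov--Hausdorff limit multifold $(M_\infty^4,g_\infty)$ of volume one, a uniformly bounded number $n=n(\Lambda)$ of multifold points $\{p_1,\dots,p_n\}$, a limit $s_i\to s_\infty$ with $s_\infty^2\le\epsilon$, and $C^\infty$ convergence on compact subsets of $M_\infty^4\setminus\{p_1,\dots,p_n\}$.

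Next I fix once and for all a radius $\rho_0>0$ small enough that Theorem~\ref{ThmTVVolUpper} guarantees $\mathrm{Vol}(B_{\rho_0}(p))\le C'\rho_0^4\le 1/N$ for every $p\in M_i^4$, so that the assumed Sobolev inequality is available on every ball $B_{\rho_0}(p)$. Proposition~\ref{PropRegularityForEta} combined with $\|\omega_i^j\|_{L^2(M_i^4)}=1$ then yields the uniform pointwise bound
\begin{equation*}
  \sup_{M_i^4}|\omega_i^j|\;\le\;D\,e^{-s_i\rho_0^2/48}\,\rho_0^{-2}\;=:\;M,
\end{equation*}
with $M$ independent of $i$ and $j$. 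Inserting this into the elliptic equation $\triangle\omega_i^j=\tfrac{s_i}{3}\omega_i^j$ and running interior Schauder estimates on precompact subsets $K\Subset M_\infty^4\setminus\{p_1,\dots,p_n\}$, a diagonal subsequence produces smooth limits $\omega_\infty^j$ on the smooth part of $M_\infty^4$, satisfying $|\omega_\infty^j|\le M$, $\omega_\infty^j\in\bigwedge{}^+$, and $d\omega_\infty^j=0$.

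The step I expect to be the main obstacle is preserving $L^2$-orthonormality in the limit, which amounts to ruling out accumulation of the $L^2$-mass of $\omega_i^j$ at the multifold points. For any $0<\rho<\rho_0$ and points $p_{i,\ell}\in M_i^4$ approximating the singular points $p_\ell$, write
\begin{equation*}
  \delta^{jk}\;=\;\int_{M_i^4\setminus\bigcup_\ell B_\rho(p_{i,\ell})}\langle\omega_i^j,\omega_i^k\rangle\;+\;\int_{\bigcup_\ell B_\rho(p_{i,\ell})}\langle\omega_i^j,\omega_i^k\rangle.
\end{equation*}
The bubble piece is estimated by combining the uniform bound $M$ with the volume upper bound $\mathrm{Vol}(B_\rho(p_{i,\ell}))\le C'\rho^4$ of Theorem~\ref{ThmTVVolUpper}:
\begin{equation*}
  \left|\int_{\bigcup_\ell B_\rho(p_{i,\ell})}\langle\omega_i^j,\omega_i^k\rangle\right|\;\le\;M^2\,n\,C'\rho^4,
\end{equation*}
which tends to $0$ as $\rho\searrow 0$, uniformly in $i$. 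The complementary piece lies in the region of $C^\infty$ convergence and hence converges as $i\to\infty$ to $\int_{M_\infty^4\setminus\bigcup_\ell B_\rho(p_\ell)}\langle\omega_\infty^j,\omega_\infty^k\rangle$. Sending $i\to\infty$ first and $\rho\to 0$ second yields $\langle\omega_\infty^j,\omega_\infty^k\rangle_{L^2(M_\infty^4)}=\delta^{jk}$. The ``scalar-flat'' conclusion in the statement then applies whenever this lemma is invoked on a sequence with $s_i\to 0$, as will be the case in the contradiction argument for Theorem~\ref{ThmMain}.
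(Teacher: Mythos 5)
Your proposal is correct and follows essentially the same route as the paper: a uniform $L^\infty$ bound from Proposition~\ref{PropRegularityForEta} on balls of definite size (pinned down via the Sobolev lower volume bound and the Tian--Viaclovsky upper bound), smooth convergence on the regular part by ellipticity of $d\omega=0$, and preservation of the $L^2$ inner products because the bubble neighborhoods carry volume $O(\rho^4)$ and hence negligible $L^2$-mass. The only (harmless) differences are cosmetic: you fix a small radius $\rho_0$ with $C'\rho_0^4\le 1/N$ where the paper instead fixes the ball volume at $\tfrac{1}{16}$, and you explicitly flag that scalar-flatness of the limit requires $s_i\to 0$, a point the paper leaves implicit.
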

\begin{proof}
	This follows due to the uniform boundedness provided by (\ref{EqnEllipticBoundedness}) on balls of finite size, which prevents any $L^2$-energy of the sections from disappearing into any of the bubbles.
	
	Pick any point $p_i\in{}M_i^4$; we show that $|\omega_i^j|$ is uniformly bounded at $p_i$.
	To see this, let $r$ be the radius so that $Vol\,B_{p_i}(r)=\frac{1}{16}Vol\,M_i^4=\frac{1}{16}$.
	Recall that the Sobolev constant provides a uniform lower bound for the volue growth of balls, and that the Tian-Viaclovsky result, listed here as Theorem \ref{ThmTVVolUpper} provides a uniform upper bound for volume ratios.
	Therefore we can write
	\begin{eqnarray}
		C''r^4\;\le\;
		\frac{1}{16}=Vol\,B(p_i,r)\;\le\;C'r^4
	\end{eqnarray}
	where the constant $C''$ comes from the Sobolev constant bound and $C'$ is the Tian-Viaclovsky bound.
	Thus we have $r\ge(C')^{1/4}/2$ and $r\le(C'')^{1/4}/2$.
	Our regularity result Proposition \ref{PropRegularityForEta} now gives
	\begin{eqnarray}
		|\omega_i^j|_{p_i}\;\le\;De^{-s(C'')^{1/2}/96}\frac{4}{(C')^{\frac12}} \label{EllLInfBound}
	\end{eqnarray}
	where we used the normalization $|\omega_i^j|_{L^2}=1$.
	With $s\nearrow0$ we obtain a uniformly finite bound on $L^\infty(|\omega_i^j|)$.
	
	At any smooth point $p_\infty\in{}M_\infty^4$ of the limit, there is some ball of finite size $B=B(p_\infty,r)$ so that the closure of $B$ contains only smooth points.
	On such a ball the metrics converge: $g_i\rightarrow{}g_\infty$ in the $C^\infty$ sense, and because $d\omega_i^j=0$ is an elliptic condition, the forms $\omega_i^j$ also converge smoothly to a form $\omega_\infty^j$ on $B$.
	
	Covering the singular points of $M_\infty^4$ with balls of radius $\delta$.
	The Tian-Viaclovsky convergence theory states that there are uniformly finite many singular points, $q^1,\dots,q^N$, $N=N(\Lambda)$.
	Set $\mathcal{B}_\delta=\bigcup_{k=1}^NB(q^k,\delta)$.
	The Gromov-Hausdorff convergence theory provides diffeomorphisms
	\begin{eqnarray}
		\pi_{i,\delta}:M^4_i\setminus\Omega_{i,\delta}\;\longrightarrow\;M_\infty^4\setminus\mathcal{B}
	\end{eqnarray}
	along which the metric convergence is uniform in the $C^\infty$ sense.
	The sets $\Omega_{i,\delta}$ consist of small balls around bubble regions.
	By the Tian-Viaclovsky upper volume bound, we have that $Vol\Omega_{i,\delta}\le{}NC'\delta^4$.
	Combining this with the $L^\infty$ bound (\ref{EllLInfBound}) gives
	\begin{eqnarray}
		\int_{\Omega_{i,\delta}}|\omega_i^j|^2\,dVol_i
		\;\le\;NC'\delta^4\,\cdot\,De^{-s(C'')^{1/2}/96}\frac{4}{(C')^{\frac12}}
	\end{eqnarray}
	which means no $L^2$-energy of the sections $\omega_i$ can be absorbed into the bubbles.
	
	Finally because convergence of the sections $\omega_i^j\rightarrow\omega_\infty^j$ is uniform on the smooth portion $M_i^4\setminus\Omega_{i,\delta}$, we retain, for each $\delta$, that
	\begin{eqnarray}
		1\;\le\;\int_{M_\infty^\delta\setminus\mathcal{B}_\delta}|\omega_\infty^j|^2dVol_\infty
		\;\ge\;1-NC'\delta^4\,\cdot\,De^{-s(C'')^{1/2}/96}\frac{4}{(C')^{\frac12}}.
	\end{eqnarray}
	Sending $\delta\searrow0$, we see $|\omega_\infty^j|_{L^2}=1$.
	The fact that $\left<\omega_{\infty}^j,\omega_{\infty}^k\right>_{L^2}=0$ follows similarly from the $C^\infty$ convergence of the sections on the smooth part of the manifold.
\end{proof}

\noindent{\it Proof of Theorem \ref{ThmMain}} 

Using (\ref{LemmaConvOfSections}) we obtain a bound on the betti numbers $b^+$ of the manifolds $M_i^4$.
Pick a large integer $K<\infty$, and assume we have a sequence $b^+(M_i^4)\ge{K}$, where the Riemannian manifolds $(M^4_i,g_i)$ satisfy the hypotheses of Theorem \ref{ThmMain}.

Taking a Gromov-Hausdirff limit $(M^4_i,g_i)\rightarrow(M_\infty^4,g_\infty)$, we obtain a scalr-flat, half-conformally flat multifold $(M_\infty^4,g_i)$.
The Tian-Viaclovsky theory, Theorem \ref{ThmTVConvergence} limits the number and multiplicity of each multifold point, which means the number of orbifold components of $(M_\infty^4,g_\infty)$ is uniformly bounded by some number $N=N(\Lambda)$.

The LeBrun method of \cite{LeB} shows that on each compact orbifold component, sections $\omega_\infty^j\in\bigwedge^+$ with $d\omega_\infty^j$ are covariant-constant.
Becuase the rank of $\bigwedge^+$ is three, each orbifold component has at most 3 non-zero closed sections of $\bigwedge^+$.
Therefore the total number of closed sections of $\bigwedge^+$ that are non-zero somewhere is at most 3 times the number of orbifold components, which is bounded by $3N$

But our Lemma \ref{LemmaConvOfSections} guarantees $L^2$ continuity of all components $\omega_i^j$, $1\le{}j\le{}K$.
We conclude that $K\le{}2N$, so $b^+$ is bounded uniformly in terms of $\Lambda$, as claimed.
\hfill $\square$

\section{Examples} \label{SecExamples}

Of central importance to the study of sections $\omega\in\bigwedge^+$ is learning how they might behave within bubbles.
Here we construct several examples that illustrate certain of these behaviors.
In the non-collapsed setting, the singularity models are complete ALE manifolds with finitely many ends, along with closed, bounded sections of $\bigwedge^+$.
The phenomena we explore are:
\begin{enumerate}
	\item A 2-ended ALE manifold with a closed, bounded $\omega\in\bigwedge^+$ that is non-K\"ahler, but is asymptotically K\"ahler on both ends, and
	\item A multi-ended ALE bubble model with non-trivial cohomology at the second level, and has a closed, bounded $\omega\in\bigwedge^+$ that is asymptotically K\"ahler along one end, and asymptotically 0 along all other ends.
\end{enumerate}

To build our asymptotically K\"ahler closed sections on a certain 2-ended ALE manifold, we learn to construct closed sections of $\bigwedge^+$ using a dimension-reduction method.
Letting $t$ be a distance function that is smooth on some region, consider its level-sets $N_{t_0}^3=\{t=t_0\}$.
One notices that both $\bigwedge^1_{N_{t_0}^3}$ and $\bigwedge^+\Big|_{N_{t_0}^3}$ are three-dimensional.
In fact there is an isometric isomorphism between then, given by
\begin{eqnarray}
	\begin{aligned}
		&F_{t}:\bigwedge{}^+\Big|_{N_{t}^3}\;\longrightarrow\;\bigwedge{}^1{}_{N_{t}^3}, \\
		&F_{t}(\omega)\;=\;i_{\sqrt{2}dt}\omega
	\end{aligned} \label{EqnIsomIso}
\end{eqnarray}
where $i_{v}\eta=*(v\wedge*\eta)$ is the interior product.

If $t$ is considered a ``time'' variable, then a form $\omega\in\bigwedge^+$ is the same as a time-varying form $\eta(t)=F_t(\omega)$ on the time-varying manifolds $N^3_t$.
The issue is determining the conditions that forces $d\omega=0$.
This formulation allows a separation of variables approach to solving $d\omega=0$, where the ``time'' variable is separated from the ``space'' variables which exist on the manifolds $N^3_t$.

Below we do this for flat $\mathbb{R}^4$ where $t$ is the distance to the origin and $N^3_t$ is the 3-sphere of radius $t$.
This allows us to build examples of type (1).

{\bf Remark}.
The great difficulty with finding closed sections of $\bigwedge^+$ on ALE manifolds is the fact that the equation $d\omega=0$ is a first order, overdetermined, elliptic PDE.
Being overdetermined is actually not a serious issue; see the remark just after the proof of Proposition \ref{PropEvolCorrespond}.
The general problem of solving first order PDEs under the condition of being uniformly bounded is a very difficult problem.
To establish existence of a bounded solution to $d\omega=0$ on an ALE manifold, one would like to solve this equation on very large, but compact domains, and then take a limit.
Solving $d\omega=0$, $\omega\in\bigwedge^+$ on a half-conformally flat manifold-with-boundary is entirely analagous to solving the $\bar\partial$-problem on a complex manifold-with-boundary.
Determining whether any solutions exist at all is a complicated problem, and admissibility of boundary values is more complex yet.
Consider that, by the Cartan-K\"ahler theorem, the solution of $d\omega=0$ on the germ of a codimension-1 submanifold uniquely specifies the solution of $d\omega=0$ on its entire domain of definition.

\subsection{Separation of variables for $d\omega=0$ on flat $\mathbb{R}^4$} \label{SubSecSeparation}

Let $t:\mathbb{R}^4\rightarrow\mathbb{R}$ be the distance to the origin on flat $\mathbb{R}^4$.
The metric is
\begin{eqnarray}
	\begin{aligned}
		&g\;=\;dt^2\,+\,t^2g_{\mathbb{S}^3} \label{EqnFlatMetric}
	\end{aligned}
\end{eqnarray}
where $g_{\mathbb{S}^3}$ is the round metric on $\mathbb{S}^3$.
We solve $d\omega=0$ by separating the $t$ variable from the spherical variables.

First we record a bit of geometry on $\mathbb{S}^3$.
Let $\eta^1,\eta^2,\eta^3$ be the standard left-invariant unit frames on $\mathbb{S}^3$; recall that
\begin{eqnarray}
	d\eta^i\;=\;\epsilon^i{}_{jk}\eta^j\wedge\eta^k
\end{eqnarray}
where $\epsilon^i{}_{jk}$ is the Levi-Civita (totally antisymmetric) symbol---we have $d\eta^1=2\eta^2\wedge\eta^3$, for example.
Let $\{e_1,e_2,e_3\}$ be the corresponding frame.
Covector fields on $\mathbb{S}^3$ can be written $\eta=\alpha_i\eta^i$ where $\alpha_i:\mathbb{S}^3\rightarrow\mathbb{R}$.
We define the ``divergence'' and ``curl'' operators by
\begin{eqnarray}
	\begin{aligned}
		&div:\bigwedge{}^1_{\mathbb{S}^3}\;\rightarrow\;\bigwedge{}^0_{\mathbb{S}^3}, 
		\quad\;\; 
		div(\eta)\;\triangleq\;\delta^{ij}e_i(\alpha_i) \\
		&curl:\bigwedge{}^1_{\mathbb{S}^3}\;\rightarrow\;\bigwedge{}^1_{\mathbb{S}^3}, 
		\quad
		curl(\eta)\;\triangleq\;\epsilon_i{}^{ij}e_i(\alpha_j)\eta^i.
	\end{aligned}
\end{eqnarray}
These are the the familiar vector calculus operators, except executed in a frame on $\mathbb{S}^3$ rather than on flar 3-space.
But in fact these are invariant operators.
\begin{lemma}
	On round $\mathbb{S}^3$ we have
	\begin{eqnarray}
		\begin{aligned}
			&div(\eta)\;=\;*d*\eta \\
			&curl(\eta)\;=\;-2\eta\,+\,*d\eta.
		\end{aligned}
	\end{eqnarray}
\end{lemma}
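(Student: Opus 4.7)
Both identities are local computations in the orthonormal coframe $\{\eta^1,\eta^2,\eta^3\}$, so the plan is to evaluate each side explicitly using the structure equations $d\eta^i=\epsilon^i{}_{jk}\eta^j\wedge\eta^k$ (i.e.\ $d\eta^1=2\eta^2\wedge\eta^3$ and its cyclic permutations) together with the Hodge star in this frame: $*\eta^i=\tfrac12\epsilon_{ijk}\eta^j\wedge\eta^k$ and $*(\eta^j\wedge\eta^k)=\epsilon^{jki}\eta^i$, $*(\eta^1\wedge\eta^2\wedge\eta^3)=1$.

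For the divergence identity, I would first observe the useful intermediate fact that $d(\eta^j\wedge\eta^k)=0$ whenever $j\neq k$: indeed $d(\eta^2\wedge\eta^3)=2\eta^3\wedge\eta^1\wedge\eta^3-\eta^2\wedge 2\eta^1\wedge\eta^2=0$, and the other two cases are analogous. Consequently
\begin{eqnarray*}
 d{*}\eta
 \;=\;d\!\left(\alpha_1\eta^2\wedge\eta^3+\alpha_2\eta^3\wedge\eta^1+\alpha_3\eta^1\wedge\eta^2\right)
 \;=\;\sum_i d\alpha_i\wedge *\eta^i,
\end{eqnarray*}
and only the $e_i(\alpha_i)\eta^i$ component of $d\alpha_i$ survives wedging with $*\eta^i$, giving $d{*}\eta=\bigl(\sum_i e_i(\alpha_i)\bigr)\eta^1\wedge\eta^2\wedge\eta^3$. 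Applying $*$ yields $*d{*}\eta=\sum_i e_i(\alpha_i)=\operatorname{div}(\eta)$.

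For the curl identity, write $d\eta=d\alpha_i\wedge\eta^i+\alpha_i\,d\eta^i$. The first term contributes $\sum_{j\neq i}e_j(\alpha_i)\eta^j\wedge\eta^i$, whose $\eta^2\wedge\eta^3$ coefficient is $e_2(\alpha_3)-e_3(\alpha_2)$, and cyclically. The second term contributes $2\alpha_1\eta^2\wedge\eta^3+2\alpha_2\eta^3\wedge\eta^1+2\alpha_3\eta^1\wedge\eta^2$. Applying $*$ collapses each $\eta^j\wedge\eta^k$ to the appropriate $\eta^i$, giving
\begin{eqnarray*}
 *d\eta
 \;=\;\bigl[(e_2\alpha_3-e_3\alpha_2)+2\alpha_1\bigr]\eta^1
 \,+\,\bigl[(e_3\alpha_1-e_1\alpha_3)+2\alpha_2\bigr]\eta^2
 \,+\,\bigl[(e_1\alpha_2-e_2\alpha_1)+2\alpha_3\bigr]\eta^3,
\end{eqnarray*}
which is exactly $\operatorname{curl}(\eta)+2\eta$, rearranging to $\operatorname{curl}(\eta)=-2\eta+*d\eta$.

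There is no real obstacle here beyond bookkeeping of signs; the only point worth emphasizing is the role of the noncommutativity of the frame, which is what produces the extra $-2\eta$ correction that distinguishes $\mathbb{S}^3$ from flat $\mathbb{R}^3$. That $-2$ is precisely twice the structure constant in $d\eta^i=2\eta^j\wedge\eta^k$, and is the source of the spectral gap $\operatorname{spec}(*d)=\mathbb{Z}\setminus\{-1,0,1\}$ on divergence-free 1-forms invoked in the introduction.
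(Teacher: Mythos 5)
Your proof is correct and follows essentially the same route as the paper: both compute $d{*}\eta$ and $d\eta$ directly in the left-invariant coframe using the structure equations and the observation that $d(\eta^j\wedge\eta^k)=0$, then apply the Hodge star. The only difference is cosmetic (explicit components versus $\epsilon$-index bookkeeping), and your closing remark about the $-2$ correction is a nice aside but not needed for the verification.
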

\begin{proof}
	The Hodge star is
	\begin{eqnarray}
		\begin{aligned}
			&*:\bigwedge{}^1\rightarrow\bigwedge{}^2, \quad
			*\eta^i\;=\;\frac12\epsilon^i{}_{jk}\eta^j\wedge\eta^k \\
			&*:\bigwedge{}^2\rightarrow\bigwedge{}^1, \quad
			*\eta^j\wedge\eta^k\;=\;\epsilon_i{}^{jk}\eta^i.
		\end{aligned} \label{EqnHodgeStarS3}
	\end{eqnarray}
	For $div(\eta)$ we use (\ref{EqnHodgeStarS3}) along with the fact that $d(\eta^j\wedge\eta^k)=0$ to find
	\begin{eqnarray}
		\begin{aligned}
			*\eta&\;=\;*(\alpha_i\eta^i)\;=\;\frac12\epsilon^i{}_{jk}\alpha_i\eta^j\wedge\eta^k \\
			d*\eta&
			\;=\;\frac12\epsilon^i{}_{jk}e_l(\alpha_i)\eta^l\wedge\eta^j\wedge\eta^k \\
			*d*\eta&\;=\;\frac12\epsilon^i{}_{jk}e_l(\alpha_i)\epsilon^{ljk}
			\;=\;\delta^{il}e_l(\alpha_i)
			\;=\;div(\eta)
		\end{aligned}
	\end{eqnarray}
	where we used the identity $\epsilon^i{}_{jk}\epsilon^{ljk}=2\delta^{il}$.
	To compute $curl$, we use $d\eta^i=\epsilon^i{}_{jk}\eta^j\wedge\eta^k$ to find
	\begin{eqnarray}
		\begin{aligned}
			d\eta
			&\;=\;d\left(\alpha_j\eta^j\right)
			\;=\;e_i(\alpha_j)\eta^i\wedge\eta^j\,+\,\alpha_i\epsilon^i{}_{jk}\eta^j\wedge\eta^k \\
			*d\eta
			&\;=\;e_i(\alpha_j)\epsilon_k{}^{ij}\eta^k
			\,+\,\alpha_i\epsilon^i{}_{jk}\epsilon_l{}^{jk}\eta^l \\
			&\;=\;e_i(\alpha_j)\epsilon_k{}^{ij}\eta^k
			\,+\,2\alpha_i\delta^i_l\eta^l \\
			&\;=\;curl(\eta)\,+\,2\eta
		\end{aligned}
	\end{eqnarray}
	where we used $\epsilon^i{}_{jk}\epsilon_l{}^{jk}=2\delta^i_l$.	
\end{proof}
Considering $\left(\mathbb{R}^{>0}\times\mathbb{S}^3\right)\cup\{pt\}=\mathbb{R}^4$ with metric (\ref{EqnFlatMetric}), the forms $\eta^i$ pull back to forms on $\mathbb{R}^4\setminus\{pt\}$, where the unit forms are $t\eta^i$.
The three standard covariant-constant sections of $\bigwedge^+$ are
\begin{eqnarray}
	\omega^i=\frac{1}{\sqrt{2}}\left(tdt\wedge\eta^i\,+\,\frac12t^2\epsilon^i{}_{jk}\eta^j\wedge\eta^k \right) \label{EqnKahlerExplicit}
\end{eqnarray}
or equivalantly
\begin{eqnarray}
	\omega^i\;=\;F_t^{-1}(t\eta^i)
\end{eqnarray}
where $F_t$ is the isomorphism from (\ref{EqnIsomIso})
For instance $\omega^1=\frac{1}{\sqrt{2}}\left(tdt\wedge\eta^1+t^2\eta^2\wedge\eta^3\right)$.
It is easily checked from (\ref{EqnKahlerExplicit}) that $d\omega^i=0$ and $|\omega^i|^2=*(\omega^i\wedge\omega^i)=1$.
\begin{proposition} \label{PropEvolCorrespond}
	If $\omega=a_i\omega^i$ is a section of $\bigwedge^+$, then $d\omega=0$ is the same as the evolution equations on $\mathbb{S}^3$ given by
	\begin{eqnarray}
		\begin{aligned}
			&div(\eta)\;=\;0 \\
			&\frac{\partial\eta}{\partial{t}}\;=\;\frac1t{}curl(\eta) \\
			&\quad\;\;\;=\;\frac1t{}\left(*d\eta\,-\,2\eta \right)
		\end{aligned} \label{EqnEvoFirstEqn}
	\end{eqnarray}
	where the correspondance is via the linear isometry $\eta=F_t(t^{-1}\omega)=\alpha_i\eta^i$, and we define $\frac{\partial\eta}{\partial{t}}\triangleq\frac{\partial\alpha_i}{\partial{t}}\eta^i$.
\end{proposition}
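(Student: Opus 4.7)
The plan is a direct computation: unfold $\omega$ in terms of $\eta$, split $d$ into its radial and spherical pieces, and match the two components of $d\omega = 0$ with the two claimed equations by applying the preceding Lemma's identification of $div$ and $curl$ with Hodge-star operations on $\mathbb{S}^3$.

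First, I would use $*_{\mathbb{S}^3}\eta^i = \tfrac{1}{2}\epsilon^i{}_{jk}\eta^j\wedge\eta^k$ to rewrite (\ref{EqnKahlerExplicit}) as
\begin{equation*}
\omega \;=\; \tfrac{1}{\sqrt{2}}\bigl(\,t\, dt\wedge\eta \,+\, t^{2}\,{*_{\mathbb{S}^3}}\eta\,\bigr), \qquad \eta \,=\, a_{i}\eta^{i},
\end{equation*}
which also exhibits $F_t(t^{-1}\omega) = \eta$. Then, decomposing $d = dt\wedge\partial_t + d_S$ with $d_S$ the spherical exterior derivative, applying Leibniz along with $d(dt\wedge\alpha) = -dt\wedge d\alpha$ and noting that $\partial_t$ commutes with $*_{\mathbb{S}^3}$, one obtains
\begin{equation*}
d\omega \;=\; \tfrac{1}{\sqrt{2}}\, dt\wedge\bigl(-t\, d_S\eta \,+\, 2t\,{*_{\mathbb{S}^3}}\eta \,+\, t^{2}\,{*_{\mathbb{S}^3}}\partial_t\eta\bigr) \;+\; \tfrac{1}{\sqrt{2}}\, t^{2}\, d_S{*_{\mathbb{S}^3}}\eta.
\end{equation*}

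The decomposition of a $3$-form on $\mathbb{R}^{>0}\times\mathbb{S}^3$ into its $dt\wedge(\cdot)$ component and its purely spherical component is unique, so $d\omega = 0$ holds if and only if both pieces vanish. The purely spherical piece $d_S{*_{\mathbb{S}^3}}\eta = 0$, being a top-degree form on $\mathbb{S}^3$, is equivalent after applying $*_{\mathbb{S}^3}$ to the scalar equation $*_{\mathbb{S}^3}d_S{*_{\mathbb{S}^3}}\eta = 0$, which the Lemma identifies with $div(\eta) = 0$. The bracketed $2$-form coefficient of $dt$ is inverted by the involutive isomorphism $*_{\mathbb{S}^3}\colon\bigwedge^2\to\bigwedge^1$, producing
\begin{equation*}
t^{2}\,\partial_t\eta \,+\, 2t\,\eta \,-\, t\,{*_{\mathbb{S}^3}}d_S\eta \;=\; 0,
\end{equation*}
i.e.\ $\partial_t\eta = \tfrac{1}{t}\bigl({*_{\mathbb{S}^3}}d_S\eta - 2\eta\bigr)$, which by the Lemma equals $\tfrac{1}{t}\,curl(\eta)$.

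I do not anticipate a real obstacle beyond book-keeping: the argument is purely algebraic and rests only on the Maurer--Cartan identities $d\eta^i = \epsilon^i{}_{jk}\eta^j\wedge\eta^k$ together with the Hodge-star formulas already established. The only sources of possible error are the sign in $d(dt\wedge\alpha) = -dt\wedge d\alpha$ and the $t$-weights that appear when differentiating across factors of $t$ and $t^{2}$; once these are tracked, the two halves of $d\omega = 0$ line up term-by-term with the two stated evolution equations.
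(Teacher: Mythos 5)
Your proposal is correct and follows essentially the same route as the paper: a direct expansion of $d\omega$ followed by matching the two independent components of the resulting $3$-form with $div(\eta)=0$ and the evolution equation. The only cosmetic difference is that you organize the computation via the splitting $d = dt\wedge\partial_t + d_S$ and the $3$-dimensional Hodge star, whereas the paper expands $d\omega = d\alpha_i\wedge\omega^i$ and applies the $4$-dimensional Hodge star to read off the components; both yield the same term-by-term identification.
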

{\bf Remark}.
Changing to $u=\log{}t$ the evolution equations are
\begin{eqnarray}
	div(\eta)\;=\;0, \quad\quad
	\frac{\partial\eta}{\partial{}u}\;=\;curl(\eta).
\end{eqnarray}
As discussed in the remarks following (\ref{EqnEuclMax}), these are the so-called Euclidean-Maxwell equations.
\begin{proof}
	With $d\omega^i=0$ we use $d\omega=d\alpha_i\wedge\omega^i$ to obtain
	\begin{eqnarray}
		\begin{aligned}
			d\omega
			&\;=\;\frac{\partial\alpha_i}{\partial{t}}dt\wedge\omega^i 
			\;+\;e_j(\alpha_i)\eta^j\wedge\omega^i.
		\end{aligned}
	\end{eqnarray}
	Because $\omega^i=\frac{1}{\sqrt{2}}\left(tdt\wedge{}\eta^i+\frac12t^2\epsilon^i{}_{st}\eta^s\wedge\eta^t \right)$, we have
	\begin{eqnarray}
		\begin{aligned}
			&dt\wedge\omega^i=\frac{1}{2\sqrt{2}}t^2\epsilon^i{}_{st}dt\wedge\eta^s\wedge\eta^t \\
			&\eta^j\wedge\omega^i=\frac{1}{\sqrt{2}}tdt\wedge\eta^i\wedge\eta^j+\frac{1}{2\sqrt{2}}t^2\epsilon^i{}_{st}\eta^j\wedge\eta^s\wedge\eta^t
		\end{aligned}
	\end{eqnarray}
	Therefore we obtain
	\begin{eqnarray}
		\begin{aligned}
			d\omega
			&\;=\;
			\frac{1}{2\sqrt{2}}\frac{\partial\alpha_i}{\partial{t}}
			t^2\epsilon^i{}_{st}dt\wedge\eta^s\wedge\eta^t \\
			&\quad\quad\;+\;e_j(\alpha_i)\left(
			\frac{1}{\sqrt{2}}tdt\wedge\eta^i\wedge\eta^j+\frac{1}{2\sqrt{2}}t^2\epsilon^i{}_{st}\eta^j\wedge\eta^s\wedge\eta^t\right) \\
			&\;=\;
			\frac{1}{\sqrt{2}}
			\left(\frac{\partial\alpha_i}{\partial{t}}t^2\frac12\epsilon^i{}_{st}\eta^s\wedge\eta^t
			+te_j(\alpha_i)\eta^i\wedge\eta^j
			\right)\wedge{}dt \\
			&\quad\quad\;+\;e_j(\alpha_i)
			\left(\frac{1}{2\sqrt{2}}t^2\epsilon^i{}_{st}\eta^j\wedge\eta^s\wedge\eta^t\right) \\
		\end{aligned}
	\end{eqnarray}
	To complete the computation, we utilize the Hodge-star on $\mathbb{R}^4$.
	We have $*(\eta^j\wedge\eta^s\wedge\eta^t)=(1/t^3)\epsilon^{jst}dt$ and $*(\eta^s\wedge\eta^t\wedge{}dt)=-(1/t)\epsilon^{st}{}_l\eta^l$.
	Therefore
	\begin{eqnarray}
		\begin{aligned}
			*d\omega
			&\;=\;
			-\frac{1}{\sqrt{2}}
			\left(\frac{\partial\alpha_i}{\partial{t}}t\frac12\epsilon^i{}_{st}\epsilon^{st}{}_l\eta^l
			+e_j(\alpha_i)\epsilon^{ij}{}_l\eta^l
			\right) \\
			&\quad\quad\quad\quad\;+\;e_j(\alpha_i)
			\left(\frac{1}{2\sqrt{2}t}\epsilon^i{}_{st}\epsilon^{jst}dt\right) \\
			&\;=\;
			-\frac{1}{\sqrt{2}}
			\left(t\frac{\partial\alpha_l}{\partial{t}}
			+e_j(\alpha_i)\epsilon^{ij}{}_l
			\right)\eta^l\;+\;\frac{1}{t\sqrt{2}}e_j(\alpha_i)\delta^{ij}dt \\
			&\;=\;
			-\frac{t}{\sqrt{2}}
			\left(\frac{\partial\alpha_l}{\partial{t}}\eta^l
			-\frac{1}{t}curl(\eta)
			\right)\eta^l
			\;+\;\frac{1}{t\sqrt{2}}div(\eta)\,dt \\
		\end{aligned}
	\end{eqnarray}
	We conclude that $d\omega=0$ precisely when $div(\eta)=0$ and $\frac{\partial\eta}{\partial{t}}-\frac1t{}curl(\eta)=0$.
\end{proof}

{\bf Remark}.
The expression $d\omega=0$ is an overdetermined first-order elliptic equation; nevertheless it is reducible to a critically determined equation.
We show how this is done on flat $\mathbb{R}^4$.
Expressing $d\omega=0$ on $\mathbb{R}^4$ as an evolution equation on $\mathbb{S}^3$, we have seen that $\eta=F_t(\omega)$ satisfies
\begin{eqnarray}
	div(\eta)=0, \quad \left(t\frac{\partial}{\partial{t}}-*d\,+\,2\right)\eta=0 \label{EqnEvoSystem}
\end{eqnarray}
where $\eta=F_t(\omega)$ is a time-varying covector field in $\bigwedge^1_{\mathbb{S}^3}$.
To see this is overdetermined, notice (\ref{EqnEvoSystem}) has four differential identities, whereas $\bigwedge^1_{\mathbb{S}^3}$ is only rank 3.
We reduce it to a critically determined evolution equation by utilizing the Hodge decomposition.
Because $\mathbb{S}^3$ is simply connected it has no harmonic 1-forms and its $\bigwedge^1$ Hodge decomposition is
\begin{eqnarray}
	\bigwedge{}^1\;=\;d\left(\bigwedge{}^0\right)\,\oplus\,d^*\left(\bigwedge{}^2\right).
\end{eqnarray}
Clearly the evolution equation $t\frac{\partial}{\partial{t}}-*d+2$ preserves the subspace $d^*\left(\bigwedge{}^2\right)$.
Therefore restricting to the closed subspace $d^*\left(\bigwedge{}^2\right)$, the overdetermined system (\ref{EqnEvoSystem}) reduces to the critically determined system
\begin{eqnarray}
	\left(t\frac{\partial}{\partial{t}}-*d\,+\,2\right)\eta\;=\;0.
\end{eqnarray}

\begin{proposition}[Separation of Variables]
	Assume $\eta'=\alpha_i\eta^i$ is a time-varying field on $\mathbb{S}^3$.
	To separate out the time variable, express $\alpha_i=\beta\gamma_i$ where $\beta=\beta(t)$ and $\gamma_i\in\bigwedge^0_{\mathbb{S}^3}$ has no $t$-dependency.
	Writing $\eta=\beta\eta'$ where $\eta'=\gamma_i\eta^i$, (\ref{EqnEvoFirstEqn}) is 
	\begin{eqnarray}
		\begin{aligned}
			&div\left(\eta'\right)\;=\;0 \\
			&\left(t\frac{d}{dt}\log\beta\right)\cdot\eta'
			\;=\;\left(*d-2\right)\eta'.
		\end{aligned} \label{EqnSeparated}
	\end{eqnarray}
\end{proposition}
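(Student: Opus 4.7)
The plan is to substitute the product ansatz $\eta(t,x) = \beta(t)\eta'(x)$ directly into the two equations of Proposition \ref{PropEvolCorrespond} and track where the $t$-dependence lives. Because $\gamma_i$ carries no $t$-dependence and $\beta$ carries no spatial dependence, each operator in the system naturally acts on only one of the two factors, so the computation is essentially a bookkeeping exercise.

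First I would handle the divergence constraint. Since $div$ is defined by spatial frame derivatives $\delta^{ij}e_i(\alpha_j)$ on $\mathbb{S}^3$, and $\beta(t)$ is a scalar constant in the spatial directions, we have $div(\eta) = div(\beta\eta') = \beta\, div(\eta')$. For $\beta\not\equiv 0$ the constraint $div(\eta)=0$ is therefore equivalent to $div(\eta')=0$, giving the first line of (\ref{EqnSeparated}).

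Next I would substitute into the evolution equation $\partial_t\eta = t^{-1}(*d-2)\eta$. On the left, since $\gamma_i$ is $t$-independent, $\partial_t\eta = \dot\beta(t)\,\eta'$. On the right, the operator $*d-2$ acts only in the $\mathbb{S}^3$ directions and so commutes with multiplication by the scalar function $\beta(t)$, yielding $t^{-1}(*d-2)(\beta\eta') = (\beta/t)(*d-2)\eta'$. Equating these two expressions and multiplying by $t/\beta$ gives
\begin{eqnarray*}
\left(t\,\frac{\dot\beta}{\beta}\right)\eta' \;=\; \left(t\,\frac{d}{dt}\log\beta\right)\eta' \;=\; (*d-2)\eta',
\end{eqnarray*}
which is the second line of (\ref{EqnSeparated}).

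Honestly, there is no serious obstacle here: the whole point is that the operators $div$ and $*d-2$ are spatial and the multiplication by $\beta(t)$ is purely temporal, so they commute and the variables separate in the usual way. The only thing worth emphasizing in the exposition is the consequence (not stated in the proposition but used in the sequel): for the equality to hold pointwise in $t$, the quantity $t\,\frac{d}{dt}\log\beta$ must be independent of $x$, and symmetrically $(*d-2)\eta'/\eta'$ must be independent of $t$, forcing both to equal a common constant $\lambda$. This reduces the problem to a spectral problem for $*d$ on divergence-free 1-forms on $\mathbb{S}^3$ together with the ODE $t\,\dot\beta = \lambda\beta$, whose solutions are the power functions $\beta(t) = t^\lambda$ — and it is the spectrum of $*d$ restricted to $d^*(\bigwedge^2)$ that will produce the decay gap alluded to in Theorem \ref{PropDecayGapOfForms}.
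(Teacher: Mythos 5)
Your proposal is correct and is exactly the computation the paper has in mind; the paper's own proof consists solely of the phrase ``Straightforward computation,'' and your substitution of the product ansatz $\eta=\beta(t)\eta'$, using that $div$ and $*d-2$ are purely spatial and hence commute with multiplication by $\beta(t)$, is the straightforward computation being referenced. Your closing remark about the separation constant and the resulting ODE matches the discussion the paper gives immediately after the proposition (note only that the paper labels $\lambda$ as the eigenvalue of $*d$ itself, so its $\beta=Ct^{\lambda-2}$ corresponds to your $t^{\lambda}$ with $\lambda$ the eigenvalue of $*d-2$; this is a harmless relabeling).
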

\begin{proof}
	Straightforward computation.
\end{proof}

Proceeding in the normal way, if $\eta_\lambda$ is an eigen-covector field for $*d$ on $\mathbb{S}^3$, meaning $*d\eta_\lambda=\lambda\eta_\lambda$, then (\ref{EqnSeparated}) is
\begin{eqnarray}
	&\left(t\frac{d}{dt}\log\beta\right)\eta_\lambda
	\;=\;(\lambda-2)\eta_\lambda
\end{eqnarray}
which holds if and only if $t(\log\beta)'=(\lambda-2)$, or $\beta=Ct^{\lambda-2}$.
Then
\begin{eqnarray}
	\eta\;=\;\sum_{\lambda\in{}spec(*d)}C_\lambda{}t^{\lambda-2}\eta_\lambda
\end{eqnarray}
is a solution of the evolution equation (\ref{EqnEvoSystem}).

Fortunately the eigenspace decomposition of $*d$ has already been accomplished, by Folland \cite{F} and Sandberg \cite{Sand}, although it was not expressed in precisely this way in either work.
Both works also contain a minor error.
Referencing those works, the eigenvalues of the operator 
$$
	*d:d^*\left(\bigwedge{}^2\right)\rightarrow{}d^*\left(\bigwedge{}^2\right)
$$
on the round sphere are the integers $\{\pm2,\pm3,\pm4,\dots\}$.
The error in \cite{F} and \cite{Sand} is that both claim that $\pm1$ are also eigenvalues, but we show in Theorem \ref{ThmEigenvaluesRestriction} below that this is not possible.
In fact neither \cite{F} nor \cite{Sand} ever bother compute the multiplicity of eigenvalues, even though they find complete eigenspace decompositions.
Had they done so, they would have found that $mult(\lambda)=\lambda^2-1$, which means obviously means $\lambda=\pm1$ cannot be an eigenvalue.
More concretely, formula (3.4c) of \cite{F} produces an absurdity when $\nu=1$ and $k=1$, and formula (24) of \cite{Sand} produces a triviality when $\lambda=\pm1$.
Formally we rule out $\lambda=\pm1$ in Theorem \ref{ThmEigenvaluesRestriction} using the improved elliptic inequality (\ref{LemmaImprovedElliptic}) for closed forms in $\bigwedge^+$.

Next we use the Cartan-K\"ahler theorem to show that solutions of $d\omega=0$ can always be expressed in series form.
\begin{proposition} \label{PropCorrespondance}
	Any solution of the evolution equations (\ref{EqnEvoSystem}) of the form
	\begin{eqnarray}
		\eta\;=\;\sum_{\lambda\in\mathbb{Z}\setminus\{-1,0,1\}}C_\lambda{}t^{\lambda-2}\eta_\lambda
	\end{eqnarray}
	produces a solution
	\begin{eqnarray}
		\begin{aligned}
			\omega
			&\;=\;\sum_{\lambda\in\mathbb{Z}\setminus\{-1,0,1\}}C_\lambda{}t^{\lambda-2}\omega_\lambda
		\end{aligned} \label{EqnOmegaAsSeriesSoln}
	\end{eqnarray}
	of $d\omega=0$, where we define $\omega_{\lambda}\triangleq{}F^{-1}_t(t\eta_\lambda)$.
	
	Conversely, assuming $d\omega=0$, $\omega\in\bigwedge^+_{\mathbb{R}^4}$ and $\omega$ is non-singular in some neighborhood of the unit sphere, then we can express $\omega$ as a series
	\begin{eqnarray}
		\omega\;=\;\sum_{\lambda\in\mathbb{Z}\setminus\{-1,0,1\}}C_\lambda{}t^{\lambda-2}\omega_\lambda
	\end{eqnarray}
	which holds in some neighborhood of the unit sphere.
\end{proposition}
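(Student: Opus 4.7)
The plan is to split the statement into the forward direction (series yields a solution) and the converse (any solution is captured by a series), with the converse carrying the real content.

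For the forward direction, I would first check that each separated term $C_\lambda t^{\lambda-2}\eta_\lambda$ solves the evolution system (\ref{EqnEvoSystem}) on $\mathbb{S}^3$: the $*d$-eigenforms $\eta_\lambda$ lie in $d^*(\bigwedge^2)$ and so are automatically divergence-free, while the calculation $t(\log\beta)' = \lambda - 2$ just before the statement shows $\beta(t) = t^{\lambda-2}$ is the correct time factor. Linearity of (\ref{EqnEvoSystem}) then gives that the full series solves the evolution system, and pushing the result through the isomorphism $\omega = F_t^{-1}(t\eta)$ term by term via Proposition \ref{PropEvolCorrespond} produces the closed self-dual form $\omega = \sum C_\lambda t^{\lambda-2}\omega_\lambda$.

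For the converse, I would let $\omega$ be a closed section of $\bigwedge^+$ on a neighborhood of the unit sphere $\mathbb{S}^3 = \{t=1\}$ and set $\eta(t) = F_t(t^{-1}\omega)$ on this annulus. By Proposition \ref{PropEvolCorrespond} the $1$-parameter family $\eta(t)$ satisfies the evolution system, so the initial slice $\eta_0 := \eta|_{t=1}$ is divergence-free on $\mathbb{S}^3$. As observed in the remark following Proposition \ref{PropEvolCorrespond}, restricting to the closed subspace $d^*(\bigwedge^2) \subset \bigwedge^1_{\mathbb{S}^3}$ turns $d\omega = 0$ into the critically determined elliptic system $(t\partial_t - *d + 2)\eta = 0$; in particular $\omega$ is real-analytic, hence so is $\eta_0$.

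Next I would invoke the spectral decomposition of $*d$ acting on $d^*(\bigwedge^2)$: by Theorem \ref{ThmEigenvaluesRestriction} its spectrum is $\mathbb{Z}\setminus\{-1,0,1\}$ with an $L^2$-orthonormal eigenbasis $\{\eta_\lambda\}$, and I would expand $\eta_0 = \sum_\lambda C_\lambda \eta_\lambda$. Since $\eta_0$ is real-analytic on the compact manifold $\mathbb{S}^3$, a standard elliptic-regularity argument (bounding spectral coefficients by eigenvalue powers times Sobolev norms) forces $|C_\lambda|$ to decay exponentially in $|\lambda|$. This exponential decay overpowers both the polynomial growth of the pointwise norms of the $\eta_\lambda$ and the factor $t^{\lambda-2}$ on an annulus $\{|\log t| < \delta\}$, so both $\sum C_\lambda t^{\lambda-2}\eta_\lambda$ and $\tilde\omega := \sum C_\lambda t^{\lambda-2}\omega_\lambda$ converge uniformly with all derivatives on compact subsets of this annulus. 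Finally, by the forward direction $\tilde\omega$ is a closed self-dual form with $\tilde\omega|_{t=1} = \omega|_{t=1}$ by construction, so the Cartan--K\"ahler uniqueness noted in the remark preceding Section \ref{SubSecSeparation} forces $\omega = \tilde\omega$ near $\mathbb{S}^3$. The main obstacle is the convergence estimate: the coefficients $C_\lambda$ must dominate both the $\lambda^2 - 1$ eigenspace multiplicities and the sup-norm growth of the $\eta_\lambda$, together with the $t^{\lambda-2}$ factor. Real-analyticity of $\eta_0$ supplies the required exponential decay, but pinning down the precise annulus on which the series converges, and making the Sobolev-to-$L^\infty$ comparison quantitative, is where most of the work lies.
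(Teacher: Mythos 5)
Your proposal is correct and follows essentially the same route as the paper: the forward direction via separation of variables and Proposition \ref{PropEvolCorrespond}, and the converse by expanding $F_1(\omega)$ in the complete $*d$-eigenbasis on $\mathbb{S}^3$ (the completeness being the Folland--Sandberg input rather than Theorem \ref{ThmEigenvaluesRestriction} itself), establishing convergence of the resulting series on an annulus about the unit sphere, and concluding with Cartan--K\"ahler uniqueness. One point in your favor: you derive \emph{exponential} coefficient decay from real-analyticity of solutions of the elliptic system $d\omega=0$, which is genuinely needed to dominate the $t^{\lambda-2}$ factor for $t\ne1$, whereas the paper only invokes the faster-than-polynomial decay coming from smoothness of $\eta(1)$ --- which by itself would not give convergence off the unit sphere.
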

\begin{proof}
	The first assertion is immediate from the separation of variables proceedure above, combined with Proposition \ref{PropEvolCorrespond}, which gives the equivalence of solutions of $d\omega=0$ and solutions of the evolution equation (\ref{EqnEvoSystem}).
	
	For the second assertion, we use the fact that the eigenspace decomposition of $*d:d^*\bigwedge^2_{\mathbb{S}^3}\rightarrow{}d^*\bigwedge^2_{\mathbb{S}^3}$ is complete (indeed, complete sets of eigen-covector fields are given in \cite{F} and \cite{Sand}).
	Let $\eta(1)=F_1(\omega)$ be the covector field associated to $\omega$, when restricted to the unit sphere.
	By assumption $\eta$ is smooth; it therefore has an eigenspace decomposition:
	\begin{eqnarray}
		\eta(1)\;=\;\sum_{\lambda\in\mathbb{Z}\setminus\{-1,0,1\}}C_\lambda{\eta_\lambda}.
	\end{eqnarray}
	Because $\eta$ is smooth, the standard theory states that the coefficients $C_\lambda$ are quickly decreasing (faster than polynomial) as $\lambda\rightarrow\pm\infty$.
	We have a corresponding solution of the Euclidean-Maxwell equations
	\begin{eqnarray}
		\eta(t)\;=\;\sum_{\lambda\in\mathbb{Z}\setminus\{-1,0,1\}}C_\lambda{t}^{\lambda-2}{\eta_\lambda}
	\end{eqnarray}
	provided this sequence converges.
	But this series {\it does} converge for $t$ in some range $t\in(1-\delta,1+\delta)$, as a consequence of the coefficients $C_\lambda$ decreasing rapidly.
	We have a corresponding solution
	\begin{eqnarray}
		\bar{\omega}\;=\;\sum_{\lambda\in\mathbb{Z}\setminus\{-1,0,1\}}C_\lambda{t}^{\lambda-2}{\omega_\lambda} \label{EqnSeriesForBarOmega}
	\end{eqnarray}
	for $d\bar{\omega}=0$, $\omega\in\bigwedge^+$, that exists in some neighborhood of the unit sphere.
	
	Now consider $\omega-\bar\omega$.
	By the convergence of the series (\ref{EqnSeriesForBarOmega}) on the unit sphere, we have that $\omega-\bar\omega=0$ on the unit sphere.
	But the unit sphere is a codimension-1 submanifold of $\mathbb{R}^4$.
	Therefore the uniqueness part of Cartain-K\"ahler theorem asserts that $\omega-\bar\omega\equiv0$ on $\mathbb{R}^4$ wherever the series for $\bar\omega$ converges.
\end{proof}

\begin{lemma}[$L^2$-Orthogonality] \label{LemmaOrthProp}
	Referencing the correspondance of Proposition \ref{PropCorrespondance}, assume $\omega_{\lambda_1}$, $\omega_{\lambda_2}$ are two solutions of $d\omega=0$ corresponding to eigen-covector fields $\eta_{\lambda_1}$, $\eta_{\lambda_2}$ on $\mathbb{S}^3$, where we assume $\eta_{\lambda_1}$, $\eta_{\lambda_2}$ are orthonormal.
	
	Then on any spherical shell $t=const$ we have the $L^2$-orthogonality property for $\omega_{\lambda_1}$, $\omega_{\lambda_2}$ as well:
	\begin{eqnarray}
		\int_{t=const}i_{dt}\left(\omega_{\lambda_1}\wedge{}\omega_{\lambda_2}\right)\;=\;0.
	\end{eqnarray}
	On any ball $t\le{}const$ we have
	\begin{eqnarray}
		\int_{t\le{}const}\omega_{\lambda_1}\wedge{}\omega_{\lambda_2}\;=\;0.
	\end{eqnarray}
\end{lemma}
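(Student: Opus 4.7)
\medskip

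\noindent\textbf{Proof plan.} The crucial observation is that for self-dual 2-forms on an oriented Riemannian 4-manifold, the wedge product captures the pointwise inner product: $\omega_{\lambda_1}\wedge\omega_{\lambda_2}=\omega_{\lambda_1}\wedge *\omega_{\lambda_2}=\langle\omega_{\lambda_1},\omega_{\lambda_2}\rangle\,dVol_4$. Since the metric (\ref{EqnFlatMetric}) splits as $dVol_4=dt\wedge dVol_{\mathbb{S}^3_t}$ with $dVol_{\mathbb{S}^3_t}=t^3\,dVol_{\mathbb{S}^3}$, the interior product satisfies $i_{dt}(\omega_{\lambda_1}\wedge\omega_{\lambda_2})=\langle\omega_{\lambda_1},\omega_{\lambda_2}\rangle\,dVol_{\mathbb{S}^3_t}$. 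Thus both integrals in the statement are reduced to integrating the pointwise inner product $\langle\omega_{\lambda_1},\omega_{\lambda_2}\rangle$.

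The next step is to transfer this pointwise inner product down to $\mathbb{S}^3$ via the isometry $F_t:\bigwedge^+|_{\mathbb{S}^3_t}\to\bigwedge^1_{\mathbb{S}^3_t}$. Since $F_t(\omega_{\lambda_i})=t\eta_{\lambda_i}$ and $F_t$ is isometric with respect to the metric $g_t=t^2g_{\mathbb{S}^3}$ induced on $\mathbb{S}^3_t$, one computes
\begin{eqnarray*}
\langle\omega_{\lambda_1},\omega_{\lambda_2}\rangle_{g_{\mathbb{R}^4}}
=\langle t\eta_{\lambda_1},t\eta_{\lambda_2}\rangle_{g_t}
=t^2\cdot t^{-2}\langle\eta_{\lambda_1},\eta_{\lambda_2}\rangle_{g_{\mathbb{S}^3}}
=\langle\eta_{\lambda_1},\eta_{\lambda_2}\rangle_{g_{\mathbb{S}^3}},
\end{eqnarray*}
so the pointwise inner product is in fact $t$-independent (which is the reason the normalization by the factor of $t$ in $\omega_\lambda=F_t^{-1}(t\eta_\lambda)$ was chosen).

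The first claim now follows immediately: integrating over $\mathbb{S}^3_t$,
\begin{eqnarray*}
\int_{t=\mathrm{const}}i_{dt}(\omega_{\lambda_1}\wedge\omega_{\lambda_2})
=t^3\int_{\mathbb{S}^3}\langle\eta_{\lambda_1},\eta_{\lambda_2}\rangle_{g_{\mathbb{S}^3}}\,dVol_{\mathbb{S}^3}
=0,
\end{eqnarray*}
by the assumed $L^2$-orthonormality of $\eta_{\lambda_1},\eta_{\lambda_2}$ on $\mathbb{S}^3$. For the second claim, one writes the ball integral in polar form and reduces again to the same spherical integral:
\begin{eqnarray*}
\int_{t\le R}\omega_{\lambda_1}\wedge\omega_{\lambda_2}
=\int_0^R\!\!\int_{\mathbb{S}^3}\langle\eta_{\lambda_1},\eta_{\lambda_2}\rangle_{g_{\mathbb{S}^3}}\,t^3\,dVol_{\mathbb{S}^3}\,dt
=\frac{R^4}{4}\int_{\mathbb{S}^3}\langle\eta_{\lambda_1},\eta_{\lambda_2}\rangle_{g_{\mathbb{S}^3}}\,dVol_{\mathbb{S}^3}=0.
\end{eqnarray*}
There is no serious obstacle here; the only subtlety is bookkeeping the scaling of $g_t$ versus $g_{\mathbb{S}^3}$ on 1-forms, which is precisely what causes the nominal $t^2$ from $|t\eta_\lambda|^2$ to cancel against the inverse scaling of inner products on covectors, making $\langle\omega_{\lambda_1},\omega_{\lambda_2}\rangle$ radially constant and the orthogonality immediate.
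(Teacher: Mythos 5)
Your proof is correct and follows essentially the same route as the paper's (much terser) argument: reduce $\omega_{\lambda_1}\wedge\omega_{\lambda_2}$ to the pointwise inner product via self-duality, transfer it to $\mathbb{S}^3$ through the isometry $F_t$, invoke the $L^2$-orthogonality of the $\eta_{\lambda_i}$, and handle the ball integral by the radial/spherical iterated integral. The only caveat is that the ``solutions corresponding to $\eta_{\lambda_i}$'' carry the extra radial factors $t^{\lambda_i-2}$, so the pointwise inner product is $t^{\lambda_1+\lambda_2-4}\left<\eta_{\lambda_1},\eta_{\lambda_2}\right>_{\mathbb{S}^3}$ rather than being $t$-independent; this changes nothing, since the factor is constant on each shell and the spherical integral already vanishes.
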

\begin{proof}
	The first equality follows immediately from the $L^2$-orthogonality of eigen-covector fields of $*d$ on $\mathbb{S}^3$.
	
	The second equality follows from
	\begin{eqnarray}
		\int_{t\le{}const}\omega_{\lambda_1}\wedge{}\omega_{\lambda_2}
		\;=\;
		\int_{\tau=0}^{const}\left(\int_{t=\tau}i_{dt}\left(\omega_{\lambda_1}\wedge{}\omega_{\lambda_2}\right)\right)\,dt.
	\end{eqnarray}
\end{proof}

\begin{theorem}[Spectral gap on $\mathbb{S}^3$]\label{ThmEigenvaluesRestriction}
	On round $\mathbb{S}^3$, if $\lambda$ is an eigenvalue of $*d:d^*\bigwedge^2\rightarrow{}d^*\bigwedge^2$, then $|\lambda|\ge2$.
	If $\lambda=\pm2$ then any corresponding eigen-covector field $\eta_\lambda$ has constant norm $|\eta_\lambda|=const$ on $\mathbb{S}^3$.
	
	If $\omega_2\in\bigwedge^+$ is the form corresponding to an eigenvalue $+2$ covector field $\eta_2$, then $\omega_2$ is K\"ahler.
	
	If $\mu$ is an eigenvalue of $\triangle_d=dd^*+d^*d:d^*\bigwedge^2\rightarrow{}d^*\bigwedge^2$, then $\mu\ge4$.
\end{theorem}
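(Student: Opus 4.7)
The plan is to convert each eigen-covector field $\eta_\lambda$ on $\mathbb{S}^3$ into a closed self-dual 2-form $\omega$ on flat $\mathbb{R}^4\setminus\{0\}$ via the separation-of-variables correspondence of Proposition \ref{PropCorrespondance}, giving $\omega = t^{\lambda-2} F_t^{-1}(t\eta_\lambda)$. Since $F_t$ is a pointwise isometry from $\bigwedge^+|_{N_t^3}$ to $\bigwedge^1_{N_t^3}$, and the level set $N_t^3$ carries the scaled metric $t^2 g_{\mathbb{S}^3}$, a direct scaling computation gives the pointwise norm $|\omega|(t,\theta) = t^{\lambda-2}|\eta_\lambda|(\theta)$. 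This translates the spectral-gap question on $\mathbb{S}^3$ into an elliptic question about $\omega$ on $\mathbb{R}^4$, where Lemma \ref{LemmaImprovedElliptic} with $s=0$ tells me that $|\omega|^{1/2}$ is subharmonic.

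Expanding the Euclidean Laplacian in polar coordinates and writing $|\omega|^{1/2} = t^{(\lambda-2)/2} h(\theta)$ with $h = |\eta_\lambda|^{1/2}$, the subharmonicity $\triangle|\omega|^{1/2}\geq 0$ reduces to the pointwise spherical inequality
\begin{equation*}
\triangle_{\mathbb{S}^3}h \,+\, \frac{\lambda^2-4}{4}\,h \;\geq\; 0.
\end{equation*}
Integrating over $\mathbb{S}^3$ annihilates the Laplacian term and leaves $(\lambda^2-4)\int h \geq 0$; since $h$ is nonnegative and nontrivial, this forces $\lambda^2 \geq 4$. In the boundary case $\lambda^2 = 4$, the inequality reduces to $\triangle_{\mathbb{S}^3} h \geq 0$, so the maximum principle on the compact manifold $\mathbb{S}^3$ yields that $h$, and hence $|\eta_\lambda|$, is constant.

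For the K\"ahler assertion I specialize to $\lambda = +2$, where $t^{\lambda-2}=1$ and so $|\omega|$ is constant on all of $\mathbb{R}^4\setminus\{0\}$. Tracing back the Bochner identity used in the proof of Lemma \ref{LemmaImprovedElliptic}, with $s=0$, $\triangle|\omega|^{1/2}=0$, and $|\nabla|\omega||=0$, equality is forced in the improved Kato inequality, and one obtains $|\nabla\omega|^2 = \tfrac32|\nabla|\omega||^2 = 0$. Thus $\omega$ is covariant-constant, hence K\"ahler. For the Hodge Laplacian assertion, note that on the invariant subspace $d^*\bigwedge^2$ one has $d^*\eta = 0$, so $\triangle_d\eta = d^*d\eta$. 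The operator $*d$ is self-adjoint on divergence-free 1-forms of $\mathbb{S}^3$, so I can diagonalize $*d$ and $\triangle_d$ simultaneously; for a joint eigenvector with $*d\eta=\lambda\eta$, using $**=1$ on 1-forms of $\mathbb{S}^3$ and $d^* = *d*$ on 2-forms, a one-line computation gives $\triangle_d\eta = \lambda\, d^*(*\eta) = \lambda *d\eta = \lambda^2\eta$, and combining with $|\lambda|\geq 2$ yields $\mu\geq 4$.

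The main obstacle I foresee is not any single step but rather getting the scaling $|\omega|=t^{\lambda-2}|\eta_\lambda|$ right at the outset, since a sign or exponent error here would propagate through the polar expansion and ruin the integration-over-$\mathbb{S}^3$ argument. The remaining ingredients---Bochner-equality rigidity, self-adjointness of $*d$ on divergence-free 1-forms, and the simultaneous diagonalization---are essentially standard once the correspondence $\eta_\lambda\leftrightarrow\omega$ is correctly normalized.
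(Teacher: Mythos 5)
Your proposal is correct and follows essentially the same route as the paper: transplant $\eta_\lambda$ to the closed self-dual form $t^{\lambda-2}\omega_\lambda$ on flat $\mathbb{R}^4$, apply the improved elliptic inequality of Lemma \ref{LemmaImprovedElliptic} with $s=0$, and split the Laplacian in polar coordinates to obtain the spherical inequality that forces $\lambda^2\ge4$ and the constancy of $|\eta_{\pm2}|$. Your finishing steps differ only cosmetically from the paper's (integrating over $\mathbb{S}^3$ rather than evaluating at a maximum point; Bochner-identity rigidity rather than integration by parts for the K\"ahler claim; simultaneous diagonalization of $*d$ and $\triangle_d=(*d)^2$ rather than the paper's explicit eigenvectors $\gamma^\pm_\mu=\sqrt{\mu}\,\eta_\mu\pm*d\eta_\mu$ for the last assertion).
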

{\bf Remark}. Although $*d\eta=\pm2\eta$ means $|\eta|=const$ and that $\triangle_d=(*d)^2$, it is not true that $\triangle_{d}\eta=4\eta$ means $|\eta|=const$.
\begin{proof}
	Assume $*d\eta_\lambda=\lambda\eta_\lambda$ and let $\omega_\lambda\in\bigwedge^+_{\mathbb{R}^4}$ be the corresponding closed 2-form.
	Using (\ref{EqnOmegaAsSeriesSoln}) we have that $\omega_\lambda=t^{\lambda-2}\eta_\lambda$ solves $d\omega_\lambda=0$, and of course $|\omega_\lambda|=t^{\lambda-2}|\eta_\lambda|$.
	
	We have the improved Kato inequality $\triangle\sqrt{|\omega_\lambda|}\ge0$ from Lemma \ref{LemmaImprovedElliptic}, where $\triangle$ is the rough Laplacian.
	An elementary computation gives
	\begin{eqnarray}
		\begin{aligned}
			0&\;\le\;\triangle|\omega_\lambda|^{\frac12}
			\;=\;\triangle\left(t^{-1+\lambda/2}|\eta_\lambda|^{\frac12}\right) \\
			&\;=\;
			\left(\triangle{}t^{-1+\lambda/2}\right)|\eta_\lambda|^{\frac12}
			+t^{-1+\lambda/2}\triangle|\eta_\lambda|^{\frac12} \\
			&\;=\;
			\left(
			(-1+\lambda^2/4)t^{-3+\lambda/2}
			\right)|\eta_\lambda|^{\frac12}
			+t^{-1+\lambda/2}\triangle|\eta_\lambda|^{\frac12} \\
		\end{aligned}
	\end{eqnarray}
	and therefore we have
	\begin{eqnarray}
		\begin{aligned}
			\left(1-\lambda^2/4\right)\;\le\;
			t^2|\eta_\lambda|^{-\frac12}\triangle|\eta_\lambda|^{\frac12}
		\end{aligned} \label{EqnKatoLaplacian}
	\end{eqnarray}
	which holds in a pointwise sense whenever $|\eta_\lambda|\ne0$.
	The eigen-covector field $\eta_\lambda$ is certainly smooth on $\mathbb{S}^3$, and so $|\eta|^{\frac12}$ obtains a maximum somewhere on $\mathbb{S}^3$.
	There, the maximum principle gives $\triangle|\eta_\lambda|^{\frac12}\le0$.
	This forces $1-\lambda^2/4\le0$, which means $4\le\lambda^2$.
	This concludes the proof that if $*d\eta_\lambda=\lambda\eta_\lambda$, then $|\lambda|\le\pm2$.
	
	To prove that a corresponding eigen-covector field $\eta_\lambda$, $\lambda=\pm1$, has $|\eta_\lambda|=Const$, note that the Laplacian on $\mathbb{R}^4$ splits: $\triangle_{\mathbb{R}^4}=t^{-3}\frac{\partial}{\partial{t}}\left(t^3\frac{\partial}{\partial{t}}\right)+t^{-2}\triangle_{\mathbb{R}^3}$.
	Then from (\ref{EqnKatoLaplacian}), using the fact that $|\eta_\lambda|$ is $t$-invariant, we obtain
	\begin{eqnarray}
		0
		\;\le\;t^2\triangle_{\mathbb{R}^4}|\eta_\lambda|^{\frac12}
		\;=\;\triangle_{\mathbb{S}^3}|\eta_\lambda|^{\frac12}.
	\end{eqnarray}
	Since a continuous subharmonic function on a compact manifold is constant, we have $|\eta_\lambda|=const$.
	From (\ref{EqnOmegaAsSeriesSoln}) we have $|\omega_2|=const$ and $|\omega_{-2}|=const\cdot{}t^{-4}$.
		
	Using $\omega_2=const$ on $\mathbb{R}^4$ we verify that any $\omega_2$ must be K\"ahler.
	Letting $\varphi$ be any test function, we use $\triangle\omega=0$ and integration by parts to obtain
	\begin{eqnarray}
		\begin{aligned}
			\int\varphi^2|\nabla\omega|^2
			\;=\;-2\int\varphi\left<\nabla\varphi\otimes\omega,\,\nabla\omega\right>
			\;=\;-\int\varphi\left<\nabla\varphi,\,\nabla|\omega|^2\right>
			\;=\;0
		\end{aligned}
	\end{eqnarray}
	We conclude that $\nabla\omega\equiv0$, so $\omega$ is K\"ahler.
	
	Next we consider the assertion that $\triangle_d\eta_\mu=\mu\eta_\mu$, $\eta_\mu\in{}d^*\bigwedge^2$, implies $\mu\ge4$.	
	Certainly $\mu>0$, since the Hodge Laplacian is positive definite on $\mathbb{S}^3$.
	From $div(\eta_\mu)=d^*(\eta_\mu)=0$ we see that $\triangle_d=(d^*+d)^2=(*d)^2$.
	
	Now create the 1-form $\gamma^{\pm}_\mu=\sqrt{\mu}\eta_\mu\pm*d\eta_\mu$.
	Then we compute
	\begin{eqnarray}
		\begin{aligned}
			*d\gamma^{\pm}_\mu
			&\;=\;\sqrt{\mu}*d\eta_\mu\pm(*d)^2\eta_\mu \\
			&\;=\;\sqrt{\mu}*d\eta_\mu\pm\mu\eta_\mu \\
			&\;=\;\pm\sqrt{\mu}\left(\sqrt{\mu}\eta_\mu\pm*d\eta_\mu\right)
			\;=\;\pm\sqrt{\mu}\gamma^{\pm}_\mu.
		\end{aligned}
	\end{eqnarray}
	This shows that if $\triangle_d$ has an eigenvalue of $\mu$ then $*d$ has an eigenvalue of $\pm\sqrt{\mu}$.
	Becuase $\sqrt{\mu}\ge2$ by the previous result, we conclude $\mu\ge4$.
\end{proof}

{\bf Remark}.
The eigen-covector field $\eta_2$, where $*d\eta_2=2\eta_2$, produces a closed form $\omega_2\in\bigwedge^+$ with both $|\eta_2|=const$ on $\mathbb{S}^3$ and $|\omega_2|=const$ on $\mathbb{R}^4$.
Any such form is a K\"ahler form on $\mathbb{R}^4$ for the metric $g$.
(Incidentally, this confirms that the multiplicity of the $\lambda=2$ eigenvalue of $*d$ is three.)

For the eigenvalue $-2$ of $*d$, we have $|\eta_{-2}|=const$ on $\mathbb{S}^3$ but the corresponding closed field $\omega_{-2}\in\bigwedge^+$ does not have constant norm: $|\omega_{-2}|=const\cdot{}t^{-4}$, as seen from (\ref{EqnOmegaAsSeriesSoln}).
This is the K\"ahler form for the conformally related metric $\hat{g}=t^{-4}g$, which is a flat metric with the origin and infinity changing places.

{\bf Definition}. ({\it Asymptotically K\"ahler})
	Assume $(M^4,g)$ is a complete ALE manifold with a closed 2-form $\omega\in\bigwedge^+$, $d\omega=0$.
	Let $\Omega$ be an end of $M^4$, where $\Omega$ is diffeomorphic to $(\mathbb{R}^4\setminus{}B_1)/\Gamma$, where $\Gamma$ is some discrete subgroup of $SO(4)$.
	The end $\Omega$ is called {\it asymptotically K\"ahler} with respect to $\omega$ if the following holds:
	scaling the metric $g$ on $\Omega$ to create $g_\epsilon=\epsilon^2g$ and scaling $\omega$ to create $\omega_\epsilon=\epsilon^2\omega$, we have as $\epsilon\searrow0$ that $g_\epsilon$ converges in the $C^1$ sense to a flat metric $g_\infty$ on $\mathbb{R}^4/\Gamma$, and $\omega_\epsilon$ converges in the $C^1$ sense to a non-zero covariant-constant form $\omega_\infty$, as measured in the $g_\infty$ metric.

\begin{theorem} \label{PropDecayGapOfForms}
	Assume $\Omega\approx(\mathbb{R}^4\setminus{}B_1)/\Gamma$ with metric $g$ is an ALE manifold end of order at least $2$, and assume $\omega\in\bigwedge^+$ is closed and bounded.
	Then either 
	\begin{itemize}
		\item[{\it{i}})] $|\omega|=O(1)$ and $\Omega$ is asymptotically K\"ahler with respect to $\omega$, or else 
		\item[{\it{ii}})] $|\omega|=O(\rho^{-4})$ where $\rho$ is the distance function from $\partial\Omega$, or equivalently $\omega$ is bounded under the natural compactification of $\Omega$.
	\end{itemize}
\end{theorem}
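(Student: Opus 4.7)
The plan is to reduce the problem to the flat model at infinity by a blowdown, and then exploit the explicit series expansion of Proposition \ref{PropCorrespondance} combined with the spectral gap of Theorem \ref{ThmEigenvaluesRestriction}. The key observation is that the gap $|\lambda|\ge 2$ in the spectrum of $*d$ on $\mathbb{S}^3$, translated through the ansatz $\omega=\sum C_\lambda t^{\lambda-2}\omega_\lambda$, means that the possible homogeneous rates of a closed bounded section of $\bigwedge^+$ on flat $\mathbb{R}^4/\Gamma$ lie in $\{\ldots,t^{-5},t^{-4}\}\cup\{t^0,t^1,\ldots\}$, with nothing strictly between $t^0$ and $t^{-4}$; this is exactly the dichotomy in the theorem.

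On the flat model $(\mathbb{R}^4\setminus B_1)/\Gamma$ the result is immediate from this observation: a bounded closed $\omega\in\bigwedge^+$ expands as a convergent series in the $\Gamma$-invariant eigenmodes of $*d$, and boundedness as $t\to\infty$ kills all $\lambda\ge 3$ coefficients. Either a $\lambda=2$ coefficient survives, in which case Theorem \ref{ThmEigenvaluesRestriction} identifies the leading part as a nontrivial covariant-constant K\"ahler form and $|\omega|=O(1)$; or else only modes with $\lambda\le -2$ survive, giving $|\omega|=O(t^{-4})$. To transfer this to the ALE end $\Omega$, I would blow down: for any sequence $\epsilon_k\searrow 0$, set $g_k=\epsilon_k^2 g$ and $\omega_k=\epsilon_k^2\omega$, so that the order-$2$ ALE hypothesis gives $g_k\to g_{\rm flat}$ in $C^{1,\alpha}_{\rm loc}$ on $(\mathbb{R}^4\setminus\{0\})/\Gamma$. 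Uniform boundedness of $\omega_k$ together with the elliptic system $d\omega_k=0$, $\omega_k\in\bigwedge^+_{g_k}$ (whose Bochner correction vanishes in the limit) yields a subsequential $C^{1,\alpha}$ limit $\omega_\infty$, which is either zero or a nontrivial covariant-constant K\"ahler form by the flat classification combined with boundedness near the origin (which kills the $\lambda\le -2$ modes). In the nontrivial case the blowdown is nonzero covariant constant by construction, so the definition of asymptotically K\"ahler is met and we are in case (i).

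If instead every blowdown limit vanishes, then $\sup_{t\in(R,2R)}|\omega|\to 0$ as $R\to\infty$, and the main obstacle is to upgrade this soft decay to the sharp rate $|\omega|=O(\rho^{-4})$. My plan is to run the separation-of-variables machinery of Section \ref{SubSecSeparation} directly on $\Omega$: on each coordinate sphere $\{t=R\}$ expand $F_t(\omega)$ in the $\Gamma$-invariant eigenbasis of $*d$ on $\mathbb{S}^3/\Gamma$, obtaining the flat evolution (\ref{EqnEvoFirstEqn}) perturbed by an $O(\rho^{-2})$ correction coming from the ALE metric error. The spectral gap forbids flat indicial rates strictly between $t^0$ and $t^{-4}$, and the order-$2$ decay of the perturbation is just strong enough that no new intermediate indicial root is created; a standard dyadic three-annulus iteration (or equivalently the weighted-Fredholm/indicial-root theory of Lockhart-McOwen on asymptotically flat ends) then promotes $|\omega|\to 0$ to the sharp polynomial bound $|\omega|=O(\rho^{-4})$, completing (ii). The technical heart of the argument is this indicial-root stability under the ALE perturbation, which is precisely where the order-$2$ hypothesis is used sharply.
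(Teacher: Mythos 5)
Your strategy is genuinely different from the paper's, and the contrast is instructive. You obtain the dichotomy by blowing down, expanding in the $\Gamma$-invariant eigenmodes of $*d$ on $\mathbb{S}^3$, and invoking indicial-root/three-annulus theory to absorb the error coming from the ALE metric. The paper instead conformally compactifies the end: with $g_c=G^2g\approx\rho^{-4}g$ one has $|\omega|_{g_c}=\rho^4|\omega|_g=O(r^{-4})$ near the puncture, and the improved elliptic inequality of Lemma \ref{LemmaImprovedElliptic} --- crucially with the exponent $\frac12$ coming from the refined Kato inequality --- gives $\triangle\sqrt{|\omega|_{g_c}}\ge\frac{s_c}{6}\sqrt{|\omega|_{g_c}}$ with $\sqrt{|\omega|_{g_c}}=O(r^{-2})$, exactly the rate of the four-dimensional Green's function. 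Standard isolated-singularity theory then yields $\sqrt{|\omega|_{g_c}}=Cr^{-2}+O(1)$ in one stroke: $C=0$ is case ({\it ii}) and $C\ne0$ is case ({\it i}). The series expansion and the spectral gap of Theorem \ref{ThmEigenvaluesRestriction} are used in the paper only afterwards, to show that $|\omega|_g=C+O(\rho^{-2})$ with $C\ne0$ forces the blowdown to be the single mode $C_2\omega_2$, hence K\"ahler. So the paper's route buys the entire decay gap from a scalar elliptic fact about one subsolution, whereas yours requires a full linear analysis of the system on the ALE end.

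That said, as written your proposal has a genuine gap precisely at its self-identified ``technical heart.'' The step from ``every blowdown limit vanishes'' to $|\omega|=O(\rho^{-4})$ is asserted via ``a standard dyadic three-annulus iteration'' and the claim that the order-$2$ perturbation creates no new intermediate indicial root, but none of this is carried out: you would need to (a) write $\omega$ as a solution of a second-order elliptic system whose leading part is the flat model operator with error terms decaying at a verified rate, (b) establish the three-annulus inequality with constants matching the adjacent flat indicial exponents $0$ and $-4$, and (c) close the bootstrap at the endpoint rate $O(\rho^{-4})$ rather than $O(\rho^{-4+\epsilon})$. Since this is exactly the content of case ({\it ii}), deferring it to named machinery does not constitute a proof. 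A second, smaller gap: in case ({\it i}) you produce only a subsequential blowdown limit equal to a nonzero covariant-constant form, but the definition of asymptotically K\"ahler requires convergence of the full family $\omega_\epsilon$ as $\epsilon\searrow0$; you need an a priori expansion such as $|\omega|_g=C+O(\rho^{-2})$ (which the paper extracts from the compactified picture) to show all blowdown limits coincide.
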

{\bf Remark}. In part this is a ``gap theorem'' for the decay rate of any closed form $\omega\in\bigwedge^+$, assuming $|\omega|$ is bounded.
Decay rates strictly between $O(1)$ and $O(\rho^{-4})$ are forbidden.

{\bf Remark}.
The ``natural'' conformal compactification of a manifold end is $\hat{g}=G^{2}g$ where $G$ is the harmonic function on $\Omega$ with $G=1$ on $\partial\Omega$ and $G\rightarrow0$ at infinity.

{\bf Remark}.
In dimension 4, if a manifold end is ALE of order $2$ then its natural compactification is a Riemannian manifold with $L^\infty$ curvature tensor, and therefore at least $C^{1,\alpha}$ metric across the compactification point.
If the metric is special, say half-conformally flat or Bach-flat, the compactified metric is $C^\infty$ across the compactification point.
If the metric is scalar flat, the compactified metric remains scalar flat.
\begin{proof}
	Assume $|\omega|_g$ is bounded.
	The Harmonic function $G$ is asymptotically $\rho^{-2}$ so the compactified metric is $g_c\approx{}\rho^{-4}g$, and $|\omega|_{g_c}=\rho^{4}|\omega|_{g}$.
	If $r$ is the distance to the origin in the $g_c$ metric, then $r=\rho^{-1}+O(\rho^{-2})$, so that $\omega$ has a pole: $|\omega|_{g_c}=O(r^{-4})$.
	
	But by Lemma \ref{LemmaImprovedElliptic}, which is a consequence of the improved Kato inequality, we also have $\triangle\sqrt{|\omega|_{g_c}}\ge\frac{s_c}{6}$, where of course $\sqrt{|\omega|_{g_c}}=O(r^{-2})$.
	The usual elliptic theory says $\sqrt{|\omega|_{g_c}}$ has at worst a simple pole: $\sqrt{|\omega|_{g_c}}=Cr^{-2}+O(1)$.
	
	If $C=0$ then $|\omega|_{g_c}$ is bounded and so in the original metric $|\omega|_g$ decays like $O(\rho^{-4})$; this is case ({\it{ii}}).
	
	If $C\ne0$, then the fact that $|\omega|_{g_c}=Cr^{-4}+O(r^{-2})$ means $|\omega|_g=r^4|\omega|_{g_c}=C+O(\rho^{-2})$.
	Our task now is to show that $|\omega|_g$ asymptotically constant means $\omega$ is actually asymptotically K\"ahler.
	Take a blowdown limit of $(\Omega,g)$ by scaling $g'=\epsilon^2g$, $N\rightarrow\infty$, and simultaneously scale the form $\omega$ to obtain $\omega'=\epsilon^{-2}\omega$.
	Then $|\omega'|_{g'}=|\omega|_g$, so we retain $|\omega'|=O(1)$ for every $\epsilon$.
	The elliptic equation $\triangle'\omega'=s'/3$ forces $\omega'$ to converge in the limit and we retain $|\omega'|=O(1)$ in the limit.
	
	Proposition \ref{PropCorrespondance} says $\omega'$ is equal to its series representation.
	We write
	\begin{eqnarray}
		\omega'
		=\sum{}_{\lambda\in\mathbb{Z}\setminus\{-1,0,1\}}C_\lambda{}t^{\lambda-2}\omega_\lambda. \label{EqnFourierOmega}
	\end{eqnarray}
	We divide the series into three parts:
	\begin{eqnarray}
		\begin{aligned}
			\omega'&\;=\;
			C_2\omega_2
			\,+\,\sum_{\alpha=1}^\infty{}C_{\alpha+2}{}t^\alpha\omega_{\alpha+2}
			\,+\,\sum_{\alpha=4}^{\infty}C_{-\alpha+2}{}t^{-\alpha}\omega_{-\alpha+2}
		\end{aligned}
	\end{eqnarray}
	The ``positive'' series $\sum_{\alpha>0}C_{\alpha+2}{}t^\alpha\omega_{\alpha+2}$ converges for all $t\le1$ including $t=0$, whereas the second series has a singularity at $t=0$.
	But because 
	\begin{eqnarray}
		\begin{aligned}
			\sum_{\alpha=4}^{\infty}C_{-\alpha+2}{}t^{-\alpha}\omega_{-\alpha+2}
			\;=\;\omega
			\,-\,C_2\omega_2
			\,-\,\sum_{\alpha=1}^\infty{}C_{\alpha+2}{}t^\alpha\omega_{\alpha+2}
		\end{aligned}
	\end{eqnarray}
	is smooth at $t=0$, all negative coefficients are zero: $C_{-\alpha+2}=0$, $\alpha>4$.
	Thus
	\begin{eqnarray}
		\begin{aligned}
			\omega&\;=\;
			C_2\omega_2
			\,+\,\sum_{\alpha=1}^\infty{}C_{\alpha+2}{}t^\alpha\omega_{\alpha+2}.
		\end{aligned}
	\end{eqnarray}
	However this new sum has a pole of some order at $t=\infty$, which $\omega$ does not have, and therefore necessarily $C_{\alpha+2}=0$ for all $\alpha>0$.
	We have proven that $\omega=C_2\omega_2$.
	Since $\omega_2$ is K\"ahler by Theorem \ref{ThmEigenvaluesRestriction}, the claim follows.
\end{proof}

\subsection{Example: 2-ended asymptotically K\"ahler manifold} \label{SubSec2EndedAK}

On taking $s\nearrow0$ and finding a limit of our non-collapsed half-confomally flat manifolds, we know the closed section $\omega\in\bigwedge^+$ has very good behavior on every component of the limit.
The central problem is that $\omega$ might have bad behavior in the bubbles.
We present the examples of this section and the next to show that, since non-uniform behavior of $\omega$ cannot be ruled out in the bubbles, Theorem \ref{ThmMain} is the best that can be hoped for in general.

One possibility is presented in figure .
In this possibility, just before the limit is reached we have two larger manifolds connected by a tiny bubble that is a 2-ended flair.
When the limit is reached, we potentially have 2 scalar-flat manifolds connected at a point.
Each of the two components of the limit is scalar-flat, and may have up to 3 closed sections of $\bigwedge^+$.

One might wish to rule out two possibilities.
The first that a closed section $\omega\in\bigwedge^+$ might converge, as $s\nearrow0$, to a K\"ahler form on one manifold by not the other, via a transition within the bubble---such a transition is demonstrated by the $\omega$ defined by (\ref{EqnAKForm}) below.
Also possible is a closed section $\omega\in\bigwedge^+$ that might converge, as $s\nearrow0$, to a K\"ahler form on both manifold, but nevertheless not become globally K\"ahler due to bad behavior within the bubble---such ``bad behavior'' within the bubble is indeed possible, as we see again from (\ref{EqnAKForm}) by using $\alpha=\pm\beta$.

To construct our 2-ended example manifold, consider the metric on $\mathbb{R}^4\setminus\{pt\}$
\begin{eqnarray}
	g_\epsilon\;=\;\left(\epsilon^2+t^{-2}\right)^2g \label{Eqn2EndedMetric}
\end{eqnarray}
where $g$ is the flat metric given in (\ref{EqnFlatMetric}).
The usual conformal change formula confirms that $scal=0$, and of course $W^+=W^-=0$.
For each $\epsilon$ the metric $g_\epsilon$ is a 2-ended asymptotically Euclidean (AE) manifold.
The 3-sphere at $t=\epsilon^{-1}$ is a minimial separating surface, and has area $16\pi^2\epsilon^3$; see figure 2.

The function $t$ is a distance function for $g$ but not for $g_\epsilon$.
To express the $g_\epsilon$ more naturally, we create a new distance function $\rho$ which is the (signed) distance to the minimal seprating surface just described.
We compute
\begin{eqnarray}
	\begin{aligned}
		\rho=\rho(t)\;=\;\int_{\epsilon^{-1}}^\tau(\epsilon^2+\tau^{-2})d\tau
		\;=\;\epsilon^2t\,-\,t^{-1}. \label{EqnDistFunDFlare}
	\end{aligned}
\end{eqnarray}
\begin{lemma}
	The manifold $\mathbb{R}^4\setminus\{pt\}$ with the metric (\ref{Eqn2EndedMetric}), $\epsilon\ne0$, is 2-ended and ALE.
	The metric is scalar-flat and conformally flat.
	The metric is ALE of order 2, and as $\epsilon\searrow0$ the Gromov-Hausdorff convergence is to the one-point union of two copies of flat $\mathbb{R}^4$.
	For any $\epsilon>0$ we have	
	 $|\Ric{}_\epsilon|^2=\frac{192\epsilon^4}{(\rho^2+4\epsilon^2)^4}$ so $|\Riem_\epsilon|=|\Ric_\epsilon|=O(\rho^{-4})$.
\end{lemma}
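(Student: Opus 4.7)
The proof is a sequence of routine conformal-geometry computations whose only real challenge is the bookkeeping in the Ricci calculation. The central observation is that the conformal factor $u := \epsilon^2 + t^{-2}$ is annihilated by the flat Laplacian on $\mathbb{R}^4 \setminus \{0\}$: the Newtonian potential $t^{-2}$ is the fundamental solution in dimension four, verified directly from $\triangle_g f(t) = f''(t) + (3/t) f'(t)$ applied to $f = t^{-2}$. Consequently the four-dimensional conformal scalar curvature law $R_{u^2 g}\, u^3 = -6\,\triangle_g u + R_g\, u$ gives $R_{g_\epsilon} = 0$, and conformal flatness ($W^{\pm} = 0$) holds automatically since $g_\epsilon$ is conformal to the flat $g$.

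For the structural assertions I would use the signed distance $\rho = \epsilon^2 t - t^{-1}$ of (\ref{EqnDistFunDFlare}), which sweeps all of $\mathbb{R}$ as $t$ ranges over $(0,\infty)$, so the manifold has two ends. At $t\to\infty$ the factor $u \to \epsilon^2$ and $g_\epsilon \to \epsilon^4 g$, a flat end on $\mathbb{R}^4$. For the other end I substitute $s = 1/t$; under this substitution $g = s^{-4}(ds^2 + s^2 g_{\mathbb{S}^3})$, so $g_\epsilon = (1 + \epsilon^2 s^{-2})^2(ds^2 + s^2 g_{\mathbb{S}^3})$, again asymptotically flat as $s\to\infty$. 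Both ends are therefore ALE on $\mathbb{R}^4$. The Gromov-Hausdorff collapse as $\epsilon \searrow 0$ is visible from the fact that the separating minimal sphere at $t = 1/\epsilon$ has area $16\pi^2\epsilon^3 \to 0$: the neck pinches, and rescaling around base points in the two ends recovers flat $\mathbb{R}^4$ on each side, yielding the one-point union.

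The main technical work is the Ricci formula. Starting from the four-dimensional conformal Ricci law (with $R_g = 0$ and $\triangle_g u = 0$),
\begin{equation*}
\Ric_{g_\epsilon} \;=\; -\tfrac{2}{u}\,\nabla^2 u \,+\, \tfrac{4}{u^2}\,du\otimes du \,-\, \tfrac{|\nabla u|^2}{u^2}\, g,
\end{equation*}
I would substitute the Hessian of the radial function $u(t)$, which in an orthonormal $g$-frame is diagonal with radial eigenvalue $u''(t) = 6t^{-4}$ and three equal spherical eigenvalues $u'(t)/t = -2t^{-4}$, together with $|\nabla u|^2 = 4t^{-6}$. Converting to a $g_\epsilon$-orthonormal frame multiplies each component by $u^{-2}$, giving diagonal Ricci eigenvalues whose common factor turns out to be $1 - t^{-2}/u = \epsilon^2/u$. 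Squaring and summing yields $|\Ric_\epsilon|^2 = 192\,\epsilon^4 t^{-8}/u^8$. The clinching identity $(\epsilon^2 t + t^{-1})^2 = \rho^2 + 4\epsilon^2$, obtained by squaring $\rho = \epsilon^2 t - t^{-1}$ and adding $4\epsilon^2$, together with $u = (1 + \epsilon^2 t^2)/t^2$, converts this into the stated $192\epsilon^4/(\rho^2 + 4\epsilon^2)^4$.

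Finally, because $W^\pm = 0$ and the metric is scalar-flat, the Chern-Gauss-Bonnet decomposition of $|\Riem|^2$ collapses to $|\Riem|^2 = 2|\Ric|^2$, so $|\Riem_\epsilon|$ is a universal dimensional constant times $|\Ric_\epsilon|$. In particular both are $O(\rho^{-4})$ on each end, which is exactly ALE of order $2$ in dimension four.
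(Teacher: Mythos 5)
Your proposal is correct and follows essentially the same route as the paper: harmonicity of $u=\epsilon^2+t^{-2}$ for scalar-flatness, the signed distance $\rho=\epsilon^2t-t^{-1}$ and the identity $(\epsilon^2t+t^{-1})^2=\rho^2+4\epsilon^2$ for the ALE structure, the Gromov--Hausdorff limit, and the conformal transformation law for $\Ric$ (with $\triangle u=0$) to get $|\Ric_\epsilon|^2=192\epsilon^4/(\rho^2+4\epsilon^2)^4$. The only cosmetic differences are that the paper reads off both ends at once from the closed form $g_\epsilon=d\rho^2+(\rho^2+4\epsilon^2)g_{\mathbb{S}^3}$ rather than treating the $t\to0$ end by inversion, and it leaves the Hessian eigenvalue bookkeeping implicit.
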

\begin{proof}
	From (\ref{Eqn2EndedMetric}) and (\ref{EqnDistFunDFlare}) we compute
	\begin{eqnarray}
		\begin{aligned}
			g_\epsilon
			&\;=\;(\epsilon^2+t^{-2})^2dt^2\,+\,t^2(\epsilon^2+t^{-2})^2g_{\mathbb{S}^3} \\
			&\;=\;d\rho^2\,+\,\left(\rho^2\,+\,4\epsilon^2\right)g_{\mathbb{S}^3}.
		\end{aligned} \label{EqnMetricsOnR4}
	\end{eqnarray}
	Thsi clearly exhibits $g_\epsilon$ asymptotically as $g_\epsilon=d\rho^2+\rho^2(1+O(\rho^{-2}))g_{\mathbb{S}^3}$ and so $g$ is ALE of order $2$.
	
	To see that the Gromov-Hausdrff convergence is to the join of two copies of flat $\mathbb{R}^4$, notice that as $\epsilon\rightarrow0$ the expression $g_\epsilon=d\rho^2+(\rho^2+4\epsilon^2)g_{\mathbb{S}^3}$, with range $-\infty<\rho<\infty$, converges to $g_0=d\rho^2+\rho^2g_{\mathbb{S}^3}$, still with range $-\infty<\rho<\infty$.
	This identifies the sphere at $\rho=0$ to a point, which joins the $0<\rho<\infty$ copy of $\mathbb{R}^4$ with the $-\infty<\rho<0$ copy.
	
	If $\hat{g}=u^2g$, we recall the usual conformal change formula in dimension 4:
	\begin{eqnarray}
		\widehat{\Ric}\;=\;\Ric
		-\left(\frac{1}{u}\triangle{}u\right)g
		-\frac{2}{u}\nabla^2u\,+\,4\nabla\log{}u\otimes\nabla\log{}u-|\nabla\log{}u|^2g
	\end{eqnarray}
	which can be found, for instance, in \cite{Besse}.		
	Using $u=\epsilon^2+t^{-2}$ we have $\triangle{}u=0$, and further computation gives
	\begin{eqnarray}
		\begin{aligned}
			\Ric{}_\epsilon
			&\;=\;
			\frac{-4\epsilon^2}{t^4(\epsilon^2+t^{-2})^2}\left(
			4\nabla{}t\otimes{}\nabla{}t\,-\,g
			\right).
		\end{aligned} \label{EqnRicMod}
	\end{eqnarray}
	Tracing with $g_\epsilon$ and using the fact that $t$ is a distance function in the original $g$ metric, one sees that $s_\epsilon=0$.
	Norming this expression now gives
	\begin{eqnarray}
		\begin{aligned}
			|\Ric{}_\epsilon|^2_{g_\epsilon}
			&\;=\;
			\frac{192\epsilon^4}{t^8(\epsilon^2+t^{-2})^8}
			\;=\;\frac{192\epsilon^4}{(\rho^2+4\epsilon^2)^4}
		\end{aligned}
	\end{eqnarray}
	and so $|\Ric_\epsilon|=O(\rho^{-4})$, as claimed.
\end{proof}


\begin{wrapfigure}{r}{0.55\textwidth}
	\includegraphics[scale=0.7]{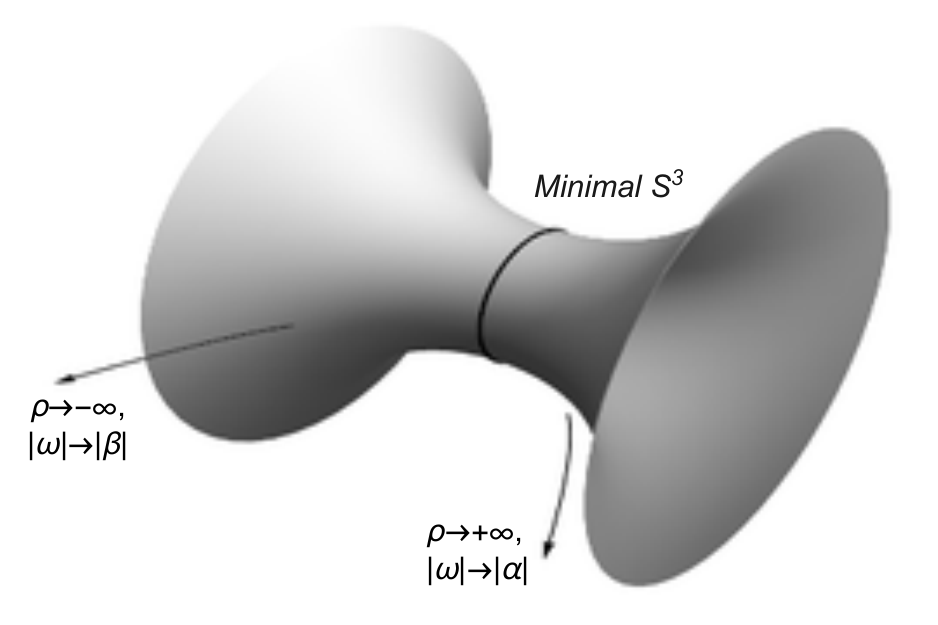}
	\caption{2-ended AE manifold with metric (\ref{Eqn2EndedMetric}) and closed 2-form (\ref{EqnAKForm}).}
\end{wrapfigure}
Now we construct the closed form $\omega\in\bigwedge^+$ that is asymptotically K\"ahler at both ends.
Let $\eta_2$, $\eta_{-2}$ be the divergence-free covector fields on $\mathbb{S}^3$ with $*d\eta_{\pm2}=\pm2\eta_{\pm2}$.
From Theorem \ref{ThmEigenvaluesRestriction} we know $|\eta_2|$ and $|\eta_{-2}|$ are pointwise constant on $\mathbb{S}^3$.
Choose the standard normalizaation $|\eta_2|\equiv1$ and $|\eta_{-2}|\equiv1$ on $\mathbb{S}^3$---we remark that the inner product $\left<\eta_2,\eta_{-2}\right>$ is not constant on $\mathbb{S}^3$.
Referencing the correspondance between time-varying 1-forms on $\mathbb{S}^3$ and 2-forms on $\mathbb{R}^4$, consider the 2-forms $\omega_{2}\in\bigwedge^+$ and $t^{-4}\omega_{-2}\in\bigwedge^+$, which are both closed by Proposition \ref{PropCorrespondance}.
Then let $\omega$ be the closed 2-form
\begin{eqnarray}
	\omega\;\triangleq\;\alpha\epsilon^4\omega_2\,+\,\beta{}t^{-4}\omega_{-2}\;\in\;\bigwedge{}^+ \label{EqnAKForm}
\end{eqnarray}
for any given $\alpha,\beta\in\mathbb{R}$.
Computing the norms in the $g_\epsilon$-metric, we have
\begin{eqnarray}
	\begin{aligned}
		|\omega|_{g_\epsilon}^2&\;=\;
		\alpha^2\epsilon^8|\omega_2|_{g_\epsilon}^2
		+2\alpha\beta{}\epsilon^4t^{-4}\left<\omega_2,\omega_{-2}\right>_{g_\epsilon}
		+\beta^2t^{-8}|\omega_{-2}|_{g_\epsilon}^2 \\
		&\;=\;\alpha^2\epsilon^8(\epsilon^2+t^{-2})^{-4} \\
		&\quad
		\,+\,2\alpha\beta{}\epsilon^4t^{-4}(\epsilon^2+t^{-2})^{-4}\left<\eta_2,\eta_{-2}\right>_{\mathbb{S}^3} \\
		&\quad
		\,+\,\beta^2t^{-8}(\epsilon^2+t^{-2})^{-4}.
	\end{aligned} \label{EqnNormOmega}
\end{eqnarray}
From this we compute the asymptotic behavior of $|\omega|^2_{g_\epsilon}$ toward the two ends of the manifold, $\rho\rightarrow\infty$ and $\rho\rightarrow-\infty$.
Using $t=\rho+\sqrt{\rho^2+4\epsilon^2}$ we estimate
\begin{eqnarray}
	\begin{aligned}
		&\alpha^2\epsilon^8(\epsilon^2+t^{-2})^2
		\;=\;
		\begin{cases}
			\alpha^2\left(1-4\epsilon^2\rho^{-2}\,+\,O(\rho^{-4})\right), \quad & \rho\rightarrow+\infty \\
			\alpha^2\left(\epsilon^8\rho^{-8}\,+\,O(\rho^{-10})\right), \quad & \rho\rightarrow-\infty
		\end{cases} \\
		&2\alpha\beta\epsilon^4{}t^{-4}(\epsilon^2+t^{-2})^{-4}
		\;=\;
		\begin{cases}
			2\alpha\beta\left(\epsilon^4\rho^{-4}\,+\,O(\rho^{-6})\right), \quad & \rho\rightarrow+\infty \\
			2\alpha\beta\left(\epsilon^4\rho^{-4}\,+\,O(\rho^{-6})\right), \quad & \rho\rightarrow-\infty
		\end{cases} \\
		&\beta^2t^{-8}(\epsilon^2+t^{-2})^{-4}
		\;=\;
		\begin{cases}
			\beta^2\left(\epsilon^8\rho^{-8}\,+\,O(\rho^{-10})\right), \quad & \rho\rightarrow+\infty \\
			\beta^2\left(1-4\epsilon^2\rho^{-2}\,+\,O(\rho^{-4})\right), \quad & \rho\rightarrow-\infty
		\end{cases}
	\end{aligned} \label{EqnsAsymptoticsVarious}
\end{eqnarray}
and using $|\left<\eta_2,\eta_{-2}\right>_{\mathbb{S}^3}|\le|\eta_2||\eta_{-2}|=1$ we therefore have
\begin{eqnarray}
	|\omega|_{g_\epsilon}^2
	\;=\;
	\begin{cases}
		\alpha^2+O(\rho^{-2}), \quad & \rho\rightarrow+\infty \\
		\beta^2+O(\rho^{-2}), \quad & \rho\rightarrow-\infty.
	\end{cases} \label{EqnTwoEndedOmegaAsymptotics}
\end{eqnarray}

Assuming $\alpha\ne0$ then $\omega$ is asymptotically K\"ahler as $\rho\rightarrow\infty$ and if $\beta\ne0$ then $\omega$ is asymptotically K\"ahler as $\rho\rightarrow-\infty$.
If only one of $\alpha,\beta$ is zero, then $\omega$ is asymptotically K\"ahler along one end of the manifold and asymptotically zero along the other.

{\bf Remark}.
In Theorem \ref{PropDecayGapOfForms} we proved that if $\omega$ is asymptotically 0, then $|\omega|=O(\rho^{-4})$---decay rates of $\rho^{-1}$, $\rho^{-2}$, $\rho^{-3}$ are forbidden.
The expressions in (\ref{EqnsAsymptoticsVarious}) provide a concrete example of this.
If $\beta=0$ say, then as $\rho\rightarrow-\infty$ the form $\omega$ is asymptotically 0, and we see from (\ref{EqnsAsymptoticsVarious}) that indeed $|\omega|=|\alpha|\epsilon^{-4}\rho^{-4}+O(\rho^{-5})$ as $\rho\rightarrow-\infty$.

\vspace{0.1in}

Finally, to prove that $|\nabla\omega|$ cannot be controlled in the bubble by any multiple of $L^2(|\nabla\omega|)$ we prove that as $\epsilon\searrow0$---and the 2-ended bubble converges to the 1-point union of two copies of $\mathbb{R}^4$---then $L^\infty(|\nabla\omega|)\nearrow\infty$ even while $L^2(|\nabla\omega|)\searrow0$.

From expression (\ref{EqnNormOmega}), certainly $\frac{\partial}{\partial{\rho}}|\omega|^2$ becomes infinite.

Now we compute $L^2(|\nabla\omega|)$ as follows.
From (\ref{EqnMainBochner}) we know $\triangle\omega=0$, so choosing some large $A$ integration by parts gives
\begin{eqnarray}
	\begin{aligned}
		\int_{-A<\rho<A}|\nabla\omega|^2
		&=
		\frac12\int_{\rho=A}\frac{\partial}{\partial\rho}|\omega|^2\,d\sigma
		-\frac12\int_{\rho=-A}\frac{\partial}{\partial\rho}|\omega|^2\,d\sigma.
	\end{aligned} \label{EqnL2IntByParts}
\end{eqnarray}
We have that $\int_{\mathbb{S}^3}\left<\eta_2,\eta_{-2}\right>=0$, and using $t=(\rho+\sqrt{\rho^2+4\epsilon^2})/2\epsilon^2$ (and perhaps some computer assistance to avoid tedious computations by hand), plugging the expression (\ref{EqnNormOmega}) into (\ref{EqnL2IntByParts}) gives
\begin{eqnarray}
	\begin{aligned}
		\int_{-A<\rho<A}|\nabla\omega|^2
		&\;=\;|\mathbb{S}^3|\cdot8\alpha^2\epsilon^2\left(1+O(A^{-2})\right).
	\end{aligned}
\end{eqnarray}
Now taking the limit $A\rightarrow\infty$, we obtain $L^2(|\nabla\omega|)=18\epsilon^2\pi^2$.
We have proved the following proposition.
\begin{proposition}
	As $\epsilon\searrow0$ the manifold $(\mathbb{R}^4\setminus\{pt\},g_\epsilon)$ converges, in the Gromov-Hausdorff topology, to the one-point union of two copies of $\mathbb{R}^4$.
	As $\epsilon\searrow0$ the form $\omega$ from (\ref{EqnAKForm}) converges to a covariant-constant form of magnitude $|\alpha|$, $|\beta|$, respectively, on each copy.
	
	Further, $L^2(|\nabla\omega|)\searrow0$ while $\sup|\nabla\omega|\nearrow\infty$ as $\epsilon\searrow0$.
\end{proposition}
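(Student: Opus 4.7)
The plan is to assemble the three assertions from the computations already carried out above, so the ``proof'' is mostly bookkeeping.

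First, for the Gromov--Hausdorff convergence I would simply invoke the earlier lemma, which rewrites $g_\epsilon$ in the form $d\rho^2+(\rho^2+4\epsilon^2)g_{\mathbb{S}^3}$ on $-\infty<\rho<\infty$. As $\epsilon\searrow 0$ the coefficient degenerates to $\rho^2g_{\mathbb{S}^3}$, which collapses the neck sphere $\{\rho=0\}$ to a point and joins the two flat copies $\rho>0$ and $\rho<0$ at that point. For the limit of $\omega$, I would use the asymptotic expansions (\ref{EqnsAsymptoticsVarious}) reread as $\epsilon$-asymptotics on a fixed compact set away from the neck: on $\rho>0$ one has $|\omega|_{g_\epsilon}^2\to\alpha^2$ because the $\alpha^2\epsilon^8(\epsilon^2+t^{-2})^{-4}$ term tends to $\alpha^2$ and the other two terms tend to $0$; symmetrically $|\omega|_{g_\epsilon}^2\to\beta^2$ on $\rho<0$. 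To promote this scalar convergence to convergence of the form itself to a covariant-constant 2-form, I would apply elliptic regularity to the Bochner equation $\triangle_{g_\epsilon}\omega=0$ (scalar curvature vanishes and $W^+=0$) together with the smooth convergence $g_\epsilon\to g_0$ off the neck; the identification of the limit with the standard K\"ahler form of magnitude $|\alpha|$ on the positive end follows because $\omega_2$ is K\"ahler for $g$ by Theorem \ref{ThmEigenvaluesRestriction}, and on the negative end because $t^{-4}\omega_{-2}$ is K\"ahler for the inverted flat metric $\hat g=t^{-4}g$ as noted in the remark after that theorem.

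Second, $L^2(|\nabla\omega|)\searrow 0$ follows from the integration by parts already displayed in (\ref{EqnL2IntByParts}), using $\triangle\omega=0$. Plugging (\ref{EqnNormOmega}) into the boundary integrals and using the spherical orthogonality $\int_{\mathbb{S}^3}\langle\eta_2,\eta_{-2}\rangle=0$ cancels the cross term, so only the pure $\alpha^2$ and $\beta^2$ pieces survive; a direct computation of $\partial_\rho|\omega|_{g_\epsilon}^2$ at $\rho=\pm A$ using $t=(\rho+\sqrt{\rho^2+4\epsilon^2})/2\epsilon^2$ shows each boundary integral is $O(\epsilon^2)$ uniformly in $A$, and letting $A\to\infty$ gives the stated $L^2(|\nabla\omega|)^2=O(\epsilon^2)$.

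Third, for $\sup|\nabla\omega|\nearrow\infty$, I would evaluate at the neck $\rho=0$ (i.e.\ $t=\epsilon^{-1}$). From (\ref{EqnTwoEndedOmegaAsymptotics}) the function $|\omega|_{g_\epsilon}^2$ transitions from asymptotic value $\alpha^2$ on $\rho\to+\infty$ to $\beta^2$ on $\rho\to-\infty$ across the neck region, whose intrinsic diameter is $O(\epsilon)$ by (\ref{EqnMetricsOnR4}); the mean value theorem then gives $\sup|\nabla(|\omega|^2)|\gtrsim|\alpha^2-\beta^2|/\epsilon$, which diverges as $\epsilon\searrow 0$ whenever $\alpha^2\ne\beta^2$. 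One can verify this directly by differentiating (\ref{EqnNormOmega}) in $\rho$ at $\rho=0$.

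The only mildly delicate step is promoting $|\omega|_{g_\epsilon}^2\to\alpha^2,\beta^2$ to honest $C^1$ convergence of $\omega$ to a covariant-constant form on each end; apart from this, everything reduces to reading off the already-established asymptotic formulas (\ref{EqnNormOmega})--(\ref{EqnTwoEndedOmegaAsymptotics}) and inserting them into (\ref{EqnL2IntByParts}).
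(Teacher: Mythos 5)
Your proposal follows the paper's own route almost exactly: the paper's ``proof'' of this proposition is simply the assembly of the preceding computations --- the lemma rewriting $g_\epsilon=d\rho^2+(\rho^2+4\epsilon^2)g_{\mathbb{S}^3}$ for the Gromov--Hausdorff statement, the asymptotics (\ref{EqnsAsymptoticsVarious})--(\ref{EqnTwoEndedOmegaAsymptotics}) together with ellipticity of $d\omega=0$ for the convergence of the form, and the integration by parts (\ref{EqnL2IntByParts}) for $L^2(|\nabla\omega|)=O(\epsilon)$. Those three parts of your write-up are fine and match the paper.

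The one genuine gap is in your argument that $\sup|\nabla\omega|\nearrow\infty$. Your mean-value-theorem step across the neck produces the lower bound $|\alpha^2-\beta^2|/\epsilon$ and therefore proves nothing when $\alpha^2=\beta^2$ --- but that is precisely a case the paper insists on (see the discussion at the start of \S\ref{SubSec2EndedAK}, where the ``bad behavior'' inside the bubble is exhibited ``by using $\alpha=\pm\beta$,'' and the introductory remark promising $\sup|\omega|=1$ with $\sup|\nabla\omega|$ large). Moreover the radial derivative of $|\omega|^2$ actually vanishes exactly at the neck when $\alpha^2=\beta^2$: writing $t=e^u/\epsilon$, the three coefficient functions in (\ref{EqnNormOmega}) become the $\epsilon$-independent functions $v^4/(1+v)^4$, $v^2/(1+v)^4$, $1/(1+v)^4$ of $v=e^{2u}$, and the derivative of the middle one vanishes at $v=1$. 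The correct repair is to observe that $|\omega|^2$, viewed as a function of $u$ at fixed $\theta\in\mathbb{S}^3$, is a fixed nonconstant function independent of $\epsilon$, while $\rho=2\epsilon\sinh u$ compresses the unit $u$-scale into a $\rho$-scale of order $\epsilon$; hence $\sup|\partial_\rho|\omega|^2|\gtrsim1/\epsilon$ for every $(\alpha,\beta)\neq(0,0)$, with the supremum attained at distance comparable to $\epsilon$ from the neck rather than on it. (Equivalently, when $\alpha\beta\neq0$ one may use the tangential direction: the cross term carries the nonconstant factor $\left<\eta_2,\eta_{-2}\right>_{\mathbb{S}^3}$ on a neck sphere of radius $2\epsilon$, so its angular gradient is already of order $1/\epsilon$.) Since $|\nabla|\omega|^2|\le2|\omega|\,|\nabla\omega|$ and $|\omega|$ is bounded, this gives $\sup|\nabla\omega|\gtrsim1/\epsilon$ in all cases. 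To be fair, the paper itself disposes of this point with the single unproved sentence that $\partial_\rho|\omega|^2$ ``certainly becomes infinite,'' so your version is no less rigorous than the original --- but as stated it covers strictly fewer cases than the proposition claims.
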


\subsection{Topologically non-trivial bubbles}

In this concluding section, we present an example of a multi-ended bubble that is not conformally Euclidean and indeed is topologically non-trivial, but still has a bounded, closed 2-form $\omega\in\bigwedge^+$.

Let $(N^4,g)$ be the Burns metric on $O(-1)$ over $\mathbb{P}^1$, the Eguchi-Hansen metric on $O(-2)$, or any of the LeBrun metrics \cite{L3} on $O(-k)$, $k>2$.
These metrics are all K\"ahler and half-fonformally flat.

Then $g$ has K\"ahler form $\omega\in\bigwedge^+$, and $g$ is ALE with group $\Gamma$ of order $|\Gamma|=k$.
In the Burns case, $g$ is AE.
Each manifold has second betti number $b^2=1$, and is contractible to a 2-sphere that represents this cohomology, colloquially called its ``bolt.''

Pick any finite set of points $p_1,\dots,p_l\in{}N^4$, and for each $p_i$ let $G_i:N^4\setminus\{p_i\}\rightarrow\mathbb{R}$ be the positive Green's function at that point, meaning $\triangle{}G_i=-\delta_{p_i}$ and $\inf_{N^4}G_i=0$.
Let $\hat{g}$ be the conformally related metric
\begin{eqnarray}
	\hat{g}\;=\;\epsilon^2\left(1+\sum_{i=1}^lG_i\right)^2g.
\end{eqnarray}
Because $1+\sum_{i=1}^lG_i$ is harmonic, $\hat{g}$ is scalar-flat.
Certainly $\hat{g}$ is half-conformally flat, by the conformal invariance of $W^+$ and $W^-$.
By reasoning silimar to that in \S\ref{SubSec2EndedAK}, the metric around each point $p_i$ becomes asymptotically Euclidean (this is a well-known fact, and we omit the tedious but straighforward computations).
As $\epsilon\rightarrow0$ the manifold $(N^4\setminus\{p_1,\dots,p_l\},\hat{g})$ converges to the 1-point union of $l$ many copies of $\mathbb{R}^4$ and one copy of $\mathbb{R}^4/\Gamma$, where $\Gamma$ is the cyclic group of order $k$.

The closed 2-form $\omega'=\epsilon^2\omega$ on $N^4\setminus\{p_1,\dots,p_l\}$ is not K\"ahler or covariant-constant in the $\hat{g}$-metric.
In fact its norm is
\begin{eqnarray}
	|\omega'|_{\hat{g}}\;=\;\frac{1}{\left(1+\sum_{i=1}^lG_i\right)^2}.
\end{eqnarray}
Because $G_i$ grows like $r^{-2}$ near $p_i$ we have that $|\omega'|_{\hat{g}}=O(\rho^{-4})$ along the end created near $p_i$, where $\rho$ is the distance in the $\hat{g}$-metric to some fixed point.
This is an ``optimal'' decay rate, according to Theorem \ref{PropDecayGapOfForms}.

Thus $\omega'$ is asymptotically zero along all ends created with the Green's functions.
On the original manifold end, we see that $\omega'$ is asymptotically K\"ahler.
This is easily seen from the fact that $|\omega'|_{\hat{g}}=O(1)$ along this end, and then using Theorem \ref{PropDecayGapOfForms}.



\begin{thebibliography}{9}
	
	
	\bibitem{BKN}{S. Bando, A. Kasue and H. Nakajima}, \emph{On a construction of coordinates at infinity on manifolds with fast curvature decay and maximal volume growth},  {Inventiones Mathematicae} {\bf 97} (1989) No. 2 {313--349}
	
	\bibitem{Besse}{A. Besse}. ``Einstein manifolds'' Springer Science \& Business Media, 2007.
	
	\bibitem{Bra}{T. Branson}, \emph{Kato constants in Riemannian geometry} Mathematical Research Letters 7, no. 3 (2000): 245-261.
	
	\bibitem{Brill} {D. Brill}, \emph{Euclidean Maxwell-Einstein theory.} Topics on Quantum Gravity and Beyond: Essay in Honor of Louis Witten on His Retirement. F. Mansouri and JJ Scanio, eds.(World Scientific: Singapore) (1993).
	
	\bibitem{Bry}{R. Bryant}, "Nine lectures on exterior differential systems." Informal notes for a series of lectures delivered (1999): 12-23.
	
	\bibitem{CGH}{D. Calderbank, P. Gauduchon, and M. Herzlich}, \emph{Refined Kato inequalities and conformal weights in Riemannian geometry.} Journal of Functional Analysis 173, no. 1 (2000): 214-255.
	
	\bibitem{F} {G. Folland}, \emph{Harmonic analysis of the de Rham complex on the sphere}, J. reine angew. Math 398 (1989): 130-143.
	
	\bibitem{Gao1}{Z. Gao}, \emph{Convergence of Riemannian manifolds; Ricci and $L^{n/2}$-curvature pinching}. Journal of Differential Geometry 32, no. 2 (1990): 349-381.
	
	\bibitem{Gao2}{Z. Gao}, \emph{$L^{n/2}$-curvature pinching}. Journal of Differential Geometry 32, no. 3 (1990): 713-774.
	
	\bibitem{GT} {D. Gilbarg and N. Trudinger}, Elliptic partial differential equations of second order. springer, 2015.
	
	\bibitem{Gro}{M. Gromov}, \emph{Curvature, diameter and Betti numbers}. Commentarii Mathematici Helvetici 56, no. 1 (1981): 179-195.
	
	\bibitem{Her1}{J. Heras}, \emph{Electromagnetism in Euclidean four space: A discussion between God and the Devil.} American Journal of Physics 62, no. 10 (1994): 914-916.
	
	\bibitem{Her2}{J. Heras} \emph{The kirchhoff gauge}. Annals of Physics 321, no. 5 (2006): 1265-1273.
	
	\bibitem{LeB}{C. LeBrun}, \emph{On the topology of self-dual 4-manifolds}, {Proceedings of the American Mathematical Society}, {\bf 98} (1986) no. 4 {637--640}
	
	\bibitem{L3}{C. Lebrun}, \emph{Counterexamples to the generalized positive action conjecture.} \emph{Communications in Mathematical Physics} 118 (1988): 591--596
	
	\bibitem{L4}{C. Lebrun}, \emph{Curvature functionals, optimal metrics, and the differential topology of 4-manifolds}. In Different faces of geometry (pp. 199-256). Springer, Boston, MA.
	\emph{Communications in Mathematical Physics} 118 (1988): 591--596
	
	\bibitem{LTZ}{L. Lindblom, N. Taylor, and F. Zhang}, \emph{Scalar, vector and tensor harmonics on the three-sphere.} General Relativity and Gravitation 49, no. 11 (2017): 139.
	
	\bibitem{Mink} {H. Minkowski}, \emph{Die Grundgleichungen f\"ur die elektromagnetischen Vorg\"ange in bewegten K\"orpern.} Nachrichten von der Gesellschaft der Wissenschaften zu Göttingen, Mathematisch-Physikalische Klasse 1908 (1908): 53-111.
	
	\bibitem{PSW}{P. Peterson, S. Shteingold, and G. Wei}, \emph{Comparison geometry with integral curvature bounds.} Geometric \& Functional Analysis GAFA 7, no. 6 (1997): 1011-1030.
	
	\bibitem{Sand}{V. Sandberg}, \emph{Tensor sherical harmonics on $\mathbb{S}^2$ and $\mathbb{S}^3$ as eigenvalue problems}. Journal of Mathematical Physics Vol 12, no. 12 (1978): 2441--2446
	
	\bibitem{Sea}{W. Seaman}, \emph{Harmonic two-forms in four dimensions}. Proceedings of the American Mathematical Society (1991): 545-548.
	
	\bibitem{Schw}{J. Schwinger} \emph{Euclidean quantum electrodynamics.} Physical Review 115, no. 3 (1959): 721.
	
	\bibitem{T}{C. Taubes}, \emph{The extistence of anti-self dual conformal structures}, {Journal of Differential Geometry} {\bf 36} (1992) {163--253}
		
	\bibitem{TV1}{G. Tian and J. Viaclovsky}, \emph{Bach-flat symptotically locally Euclidean metrics}, {Invetiones Mathematicae} {\bf 160} (2005) No. 2 {357--415}
	
	\bibitem{TV2}{G. Tian and J. Viaclovsky}, \emph{Moduli spaces of critical Riemannian metrics in dimensiona four}, {Advances in Mathematics} {\bf 196} (2005) {346--372}
	
	\bibitem{TV3}{G. Tian and J. Viaclovsky}, \emph{Volume growth, curvature decay, and critical metrics}, {arXiv preprint math/0612491} (2006)
	
	\bibitem{Web1}{B. Weber}, \emph{Two classical flows and the topology of 4-manifolds}, {To appear.} (2017)
	
	\bibitem{Y}{D. Yang}, \emph{Convergence of Riemannian manifolds with integral bounds on curvature}. I." In Annales scientifiques de l'Ecole normale supérieure, vol. 25, no. 1, pp. 77-105. 1992.
	
	\bibitem{Zamp1}{E. Zampino}, \emph{A brief study on the transformation of Maxwell equations in Euclidean four‐space}, Journal of mathematical physics 27, no. 5 (1986): 1315-1318.
	
	\bibitem{Zamp2}{E. Zampino} "Can A" Hyperspace" Really Exist?." NASA 19990023249 (1999).

\end{thebibliography}
\end{document}